\DeclareMathOperator{\vol}{vol}
\DeclareMathOperator{\Aut}{Aut}
\DeclareMathOperator{\Lie}{Lie}
\DeclareMathOperator{\id}{id}
\DeclareMathOperator{\Map}{Map}
\DeclareMathOperator{\ad}{ad}
\DeclareMathOperator{\Diff}{Diff}
\DeclareMathOperator{\Eq}{Eq}
\DeclareMathOperator{\Split}{Split}
\theoremstyle{plain}
\newtheorem{theorem}{Theorem}[section]
\newtheorem{corollary}[theorem]{Corollary}
\newtheorem{lemma}[theorem]{Lemma}
\newtheorem{proposition}[theorem]{Proposition}
\theoremstyle{definition}
\newtheorem{definition}[theorem]{Definition}
\theoremstyle{remark}
\newtheorem{remark}{Remark}[section]
\numberwithin{equation}{section}
\numberwithin{figure}{section}
\newcommand{\cC}{{\mathcal C}}
\newcommand{\cG}{\mathcal{G}}
\newcommand{\cA}{{\mathcal A}}
\newcommand{\RR}{{\mathbb R}}
\renewcommand{\a}{\alpha}
\renewcommand{\b}{\beta}
\newcommand{\fg}{{\mathfrak g}}
\newcommand{\fh}{{\mathfrak h}}
\newcommand{\fk}{{\mathfrak k}}
\newcommand{\fl}{{\mathfrak l}}
\newcommand{\wt}{\widetilde}
\newcommand{\im}{\text{im\ }}
\newcommand{\<}{\langle}
\renewcommand{\>}{\rangle}
\begin{document}

\title{The general caloron correspondence}

 \author[P. Hekmati]{Pedram Hekmati}
  \address[P. Hekmati]
  {School of Mathematical Sciences\\
  University of Adelaide\\
  Adelaide, SA 5005 \\
  Australia}
  \email{pedram.hekmati@adelaide.edu.au}

 \author[M. K. Murray]{Michael K. Murray}
  \address[M. K. Murray]
  {School of Mathematical Sciences\\
  University of Adelaide\\
  Adelaide, SA 5005 \\
  Australia}
  \email{michael.murray@adelaide.edu.au}

 \author[R. F. Vozzo]{Raymond F. Vozzo}
  \address[R. F. Vozzo]
  {School of Mathematical Sciences\\
  University of Adelaide\\
  Adelaide, SA 5005 \\
  Australia}
  \email{raymond.vozzo@adelaide.edu.au}

\date{\today}

\thanks{The authors acknowledge the support of the Australian Research Council.}

\subjclass[2010]{55R10, 55R91, 22E65, 22E67}

\begin{abstract}
We outline in detail the general caloron correspondence for the group of automorphisms of  an arbitrary
principal $G$-bundle $Q$ over a manifold $X$, including the case of the gauge group of  $Q$.  These results are used to define characteristic classes of gauge group bundles.  Explicit but complicated differential form representatives are computed in terms of a connection and Higgs field.

\end{abstract}

\maketitle

\tableofcontents

\section{Introduction}

The caloron correspondence arose originally \cite{Garland:1988} as a correspondence between calorons (instantons on $\RR^3 \times S^1$) with structure group $G$ and Bogomolny monopoles on $\RR^3$ with structure group the loop group of $G$.  Later it was realised that the self-duality and Bogomolny equations can be disregarded and interesting results can be obtained by regarding the caloron correspondence as a correspondence between $G$-bundles with connection on $M \times S^1$, for some manifold $M$ and loop group bundles with connection and Higgs field on $M$. In particular in \cite{Murray:2003}, the caloron correspondence  was used to calculate the string class of an $LG$-bundle, and generalised in \cite{Murray-Vozzo1, Murray-Vozzo2}, where it was used to define characteristic classes for $\Omega G$-bundles and $LG \rtimes S^1$-bundles. See also \cite{Bergman:2005, Bouwknegt:2009, Vozzo:2010} for related applications of the caloron correspondence. 

In the current work we generalise these constructions by replacing the circle $S^1$ by an arbitary compact, connected manifold $X$. In summary, 
if   $Y \to M$ is a fibration with fibre $X$ and $\widetilde P \to Y$ is a $G$-bundle, which over a fibre of $Y \to M$ is isomorphic to some
$G$-bundle $Q \to X$, we show that $\widetilde P \to Y$ is equivalent to an infinite-dimensional principal bundle $P \to M$ whose
structure group is the group $\Aut(Q)$ of automorphisms of the $G$-bundle $Q \to X$ or a subgroup thereof. Which subgroup occurs depends 
on whether the fibration $Y \to M$ is a product and whether the bundles are framed. The resulting four possible cases are detailed 
in Section \ref{sec:description} without proof. 

In Section \ref{sec:construction} we give the detailed proofs of the correspondence 
in the most general cases, leaving some of the specialisations for the reader. In the following section we 
show how the correspondence works when $\widetilde P$ has a connection and $P$ a connection and Higgs field.  The main results of the paper are in these two sections and summarised in Theorems \ref{thm:fib-unframed-conn-higgs} and \ref{thm:fib-unframed-conn-form} 
for the case of a general fibration and Theorems \ref{thm:prod-unframed-conn-higgs} and \ref{thm:prod-unframed-conn-form}  
for the case when the fibration is a product.
In Section \ref{sec:classes} we use these constructions to define characteristic classes of gauge group bundles
and define differential form representatives for them in terms of connections and Higgs fields. Finally in Section \ref{sec:universal} we consider the group of based gauge transformations and provide explicit formulas for its universal characteristic classes.

\section{Description of the  caloron correspondences}
\label{sec:description}

There are four basic caloron correspondences.  To introduce  notation we will discuss here what they are correspondences between and leave the definition of the actual correspondences until later. 

Let us fix  $Q \to X$ a (principal) $G$-bundle. Define $\Aut(Q)$ to be all bundle automorphisms of $Q$. A bundle automorphism defines a diffeomorphism of $X$ and  thus there is a  homomorphism $\Aut(Q) \to \Diff(X) $ whose
kernel we denote by $\cG$ and whose image we denote by $\Diff^Q(X)$.  We therefore have a short exact sequence
\begin{equation}
\label{eq:basicshortexact}
1 \to \cG \to \Aut(Q) \to \Diff^Q(X) \to 1  .
\end{equation}
Note that, unlike the case where $X = S^1$, we may not have $\Diff^Q(X) $ equal to $\Diff(X)$.  To understand how this can happen note that a diffeomorphism $\psi \in \Diff(X)$ is in the image of the map  $\Aut(Q) \to \Diff(X)$ if and only if $\psi^*(Q) \simeq Q$.  For example, if $Q \to S^2$ is the standard Hopf bundle of Chern class $1$ and $\psi$ is 
the antipodal map then $\psi^*(Q)$ has Chern class $-1$, so it cannot be isomorphic to $Q$.  
It is a standard fact that if   $\psi$ and $\chi$ are homotopic then $\psi^*(Q) \simeq \chi^*(Q)$, which shows that  the connected component of the identity $\Diff(X)_0$ is a subgroup of $\Diff^Q(X)$.  

Notice that if $R \to M$ is an $\Aut(Q)$-bundle it induces an associated bundle with fibre $X$ defined by $R \times_{\Aut(Q)} X$, where $\Aut(Q)$ acts on $X$ via the homomorphism to $\Diff^Q(X)$ above.  If  $Y \to M$ 
is a locally trivial fibre bundle with fibre $X$  and  structure group 
$\Diff^Q(X)$  denote by $F(Y)\to M$ its $\Diff^Q(X)$ frame bundle. If  $m \in M$ and $f \in F_m(Y)$ then, by definition,  $f \colon X \to Y$ is a diffeomorphism onto the fibre $Y_m$ which we call a {\em frame} at $m$.  Notice that not all 
diffeomorphisms $X \simeq Y_m$ are frames unless $\Diff^Q(X) = \Diff(X)$. 

\begin{definition}
\label{def:type}
 Let $Y \to M$ be a locally trivial fibre bundle with fibre $X$ and structure group $\Diff^Q(X)$ and $\widetilde P \to Y$ 
 be  a principal $G$-bundle. We say that $\widetilde P$ has {\em type} $Q \to X$ if for all $m \in M$ and for all $f \in F(Y)$ we 
 have $f^*(\widetilde P) \simeq Q$. 
 
 \end{definition}

\begin{remark}
Notice that if $f, \tilde f \in F(Y)_m$, then $f = \tilde f \circ \chi$ for $\chi \in \Diff^Q(X)$ so that $f^*(Q) \simeq {\tilde f}^*(Q)$.  Moreover if $M$ is connected  and $m' \in M$, we can join $m$ and $m'$ by a path which can be lifted to $F(Y)$
as a path joining $f \in F(Y)_m $ and some $f' \in F(Y)_{m'}$ so that $f^*(Q) \simeq (f')^*(Q)$. 
Hence for a connected manifold $M$ it suffices to check the type condition at a single point $m \in M$ for a single framing of $Y_m$.
\end{remark}

We have the following correspondences:

\bigskip
\noindent{\sl 1. The unframed caloron correspondence for fibrations.}

There is a bijection between isomorphism classes as follows.
\begin{itemize}
\item $G$-bundles $\widetilde P \to Y$ of type $Q \to X$  and;
\item  $\Aut(Q)$-bundles $P \to M $ with an isomorphism of  spaces over $M$ from  $P \times_{\Aut(Q)} X \to M $ to $Y \to M$. 
\end{itemize}

\bigskip
\noindent{\sl 2. The  unframed caloron correspondence for products.}

If the fibration is a product $Y = M \times X$ then this becomes   a bijection between isomorphism classes as follows.

\begin{itemize}
\item $G$-bundles $\widetilde P \to M \times X$ of type $Q \to X$  and;
\item  $\cG$-bundles $P \to M$. \bigskip
\end{itemize}

We will call these two correspondences the {\em unframed case} to distinguish them from the next two cases. 
First we make a general definition.  

\begin{definition} 
We say a fibre bundle $W \to Z$ is {\em framed} (over a submanifold  $Z_0 \subset Z$) if we have
chosen a section $s \colon Z_0 \to W$. We call $s$ a framing (over $Z_0$).
\end{definition}

We need a number of cases of this definition.  Firstly for the $G$-bundle $Q \to X$ we can pick $x_0 \in X$ and $q_0 \in Q_{x_0}$.
This amounts to a framing of $Q$ over $\{x_0 \}$. We often call $x_0$ and $q_0$ basepoints for $X$ and $Q$.  Secondly
for the fibration $Y \to M$ a framing over $M$ is simply called a framing. Thirdly if $s \colon M \to Y$ is a framing,
we will be interested in $G$-bundles framed over $s(M) \subset Y$. Again we will call these just framed.  

In 
each case there is a natural notion of morphism that preserves the framing. 
Consider then  $x \in X$ and $q \in Q_x$. By restriction the short exact sequence \eqref{eq:basicshortexact}
to framed isomorphisms we have a short exact sequence
\begin{equation}
\label{eq:fbasicshortexact}
1 \to \cG_0 \to \Aut_0(Q) \to \Diff^Q_0(X) \to 1
\end{equation}

Note that, because $Y \to M$ is not a principal bundle, a global section does not necessarily make it trivial. However, in the case that it \emph{is} trivial we take the framing of $M \times X \to M$ to be $m \mapsto (m, x)$ where $x$ is the basepoint for $X$.    
Again we have a notion of the {\em framed type} of a framed bundle by restricting all morphisms in Definition \ref{def:type} 
to be framed. Notice that if $R \to M$ is an $\Aut_0(Q)$-bundle it induces an associated framed bundle with fibre $X$ defined by $R \times_{\Aut_0(Q)} X$, where as above $\Aut_0(Q)$ acts on $X$ via the homomorphism to $\Diff_0^Q(X)$. 

With these definitions we have the following correspondences.

\bigskip
\noindent{\sl 3. The framed caloron correspondence for fibrations.}

There is  a bijection between isomorphism classes as follows.
\begin{itemize}
\item Framed $G$-bundles $\widetilde P \to Y$ over a framed fibration $Y \to M$ of framed type $Q \to X$  and;
\item  $\Aut_0(Q)$-bundles $P \to M $ with a framed isomorphism of framed spaces over $M$ from  $P \times_{\Aut_0(Q)} X \to M $ to $Y \to M$. 
\end{itemize}

\bigskip
\noindent{\sl 4. The framed caloron correspondence for products.}

If the fibration is a product $Y = M \times X$ then this   becomes  a bijection between isomorphism classes as follows.
\begin{itemize}
\item Framed $G$-bundles $\widetilde P \to M \times X$ of type $Q \to X$  and;
\item  $\cG_0$-bundles $P \to M$. 
\end{itemize}

\section{Construction of the caloron correspondences }
\label{sec:construction}
We note first from \cite{Cur} that the groups in equation (\ref{eq:basicshortexact}) are Fr\'echet Lie groups
with $\Aut(Q)$ a Fr\'echet Lie subgroup of $\Diff(Q)$ and $\Diff^Q(X)$ an open subgroup of $\Diff(X)$.  We will assume throughout that all infinite-dimensional spaces are Fr\'echet manifolds.  For details see for example \cite{Ham}. 

To establish the caloron correspondence we need the following useful fact. 
\subsection{Principal bundles and extensions}
\label{sec:pb-extensions}
Let 
$$
1 \to L \xrightarrow{\alpha} H \xrightarrow{\beta} K \to 1
$$
be an extension of Fr\'echet Lie groups, that is a short exact sequence of groups such that
$\a$ is an immersion and $\b$ a submersion in the Fr\'echet sense.  In particular  $\beta$ admits local sections and 
thus $H \to K$ is a locally trivial $L$-bundle.  Indeed if $s \colon U \to H$ is a local
section of $H \to K$ for $U \subset K$, then we can define $U \times L \to H $ by 
$(k, l ) \mapsto s(k)l$ with the inverse $h \mapsto (\beta(h), s(\beta(h))^{-1}h)$. 

We are interested in the relationships between the principal bundles in the 
following diagrams
\begin{equation} 
\xy 
(-10,10)*+{S}="1"; 
(0,0)*+{S/L}="2"; 
(-10,-10)*+{M}="3"; 
(-14,0)*+{H}="4";
(-2,7)*+{L}="5";
(-2,-7)*+{K}="6";
(30,10)*+{T}="7"; 
(40,0)*+{R}="8"; 
(30,-10)*+{M}="9"; 
(38,7)*+{L}="11";
(38,-7)*+{K}="12";
{\ar "1";"2"}; 
{\ar "1";"3"}; 
{\ar   "2";"3"};
{\ar "7";"8"}; 
{\ar   "8";"9"}
\endxy
\end{equation}

If $S \to M $ is an $H$-bundle, then $L$ acts on $S$ on the right and the 
set of orbits $S/L$ is a principal $H/L = K$-bundle over $M$.  Moreover $S \to S/L$ is a principal $L$-bundle. 
Notice that $H$ acts on $S/L$ on the right and the action on $S$ covers this. 

As $L$ is normal in $H$ the adjoint action of $H$ on itself fixes $L$. Given a  principal $L$-bundle $T \to R$ with 
$H$ acting on $R$, we say that $T$ is an {\em $H$-equivariant bundle} if the $L$-action on $T$ can be extended to an 
$H$-action on $T$ which moreover covers the $H$-action on $R$.   In the case above we clearly have that $S \to S/L$ is an $H$-equivariant bundle.
We have

\begin{proposition}
\label{prop:bundles-extensions}
Fix a principal $K$-bundle $R \to M$. We  have a bijective correspondence between isomorphism classes of the following objects:
\begin{enumerate}
\item Principal $H$-bundles $S \to M$ with $S/L \to M$ isomorphic to $R \to M$ as $K$-bundles; and
\item Principal $L$-bundles $T \to R$ which are $H$-equivariant for the 
$H$-action on $R$ induced by the $K$-action on $R$ using the homomorphism $\beta$.
\end{enumerate}
\end{proposition}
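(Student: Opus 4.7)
The plan is to observe that the correspondence is essentially the identity on total spaces: an $H$-bundle $S \to M$ fits into a tower $S \to S/L \to M$ in which $S \to S/L$ is an $L$-bundle and $S/L \to M$ is a $K$-bundle, while conversely, given an $H$-equivariant principal $L$-bundle $T \to R$, the combined $H$-action on $T$ makes $T \to M$ into an $H$-bundle with $T/L = R$. Thus one direction sends $S$ to $(S \to S/L \simeq R)$ and the other sends $T$ to $(T \to T/H = M)$, and both are realized on the same underlying manifold.

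For (1) $\Rightarrow$ (2), given $S \to M$ I restrict the $H$-action to the normal subgroup $L$, which remains free. That $S \to S/L$ is locally trivial as an $L$-bundle follows by combining the local triviality of $S \to M$ as an $H$-bundle with the local sections of $H \to K$ recalled at the start of Section \ref{sec:pb-extensions}. The induced $K$-action makes $S/L \to M$ a principal $K$-bundle, which I identify with $R$ by hypothesis; the full $H$-action on $S$ then covers the $K$-action on $R$, establishing the equivariance.

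For (2) $\Rightarrow$ (1), I must check that the given $H$-action on $T$ makes $T \to M$ a principal $H$-bundle. Freeness: if $t \cdot h = t$, then projecting to $R$ gives $r \cdot \beta(h) = r$, so freeness of the $K$-action forces $\beta(h) = e$, hence $h \in L$, and freeness of the $L$-action on $T$ then forces $h = e$. The orbit space satisfies $T/H = (T/L)/(H/L) = R/K = M$. For local triviality I take $U \subset M$ over which $R \to M$ trivialises, pull $T$ back along a section $\sigma \colon U \to R$ to obtain an $L$-bundle over $U$, trivialise it on a smaller open, and then spread out in the $K$-direction using a local section of the extension $H \to K$ to obtain a local $H$-trivialisation of $T|_U$.

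The two constructions are mutually inverse on isomorphism classes because the total space and the $H$-action are literally the same in both descriptions; the two bundle structures merely correspond to dividing out $L$ versus $H$. The main obstacle is the local-triviality argument in the second direction: one has to thread a local trivialisation of the $K$-bundle $R \to M$ through the extension $L \to H \to K$ and the $L$-bundle $T \to R$, keeping track of the $H$-equivariance throughout and, in the Fr\'echet setting, verifying that the resulting trivialisation is smooth.
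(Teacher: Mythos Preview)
Your proposal is correct and follows essentially the same route as the paper's proof: both directions are treated as in your sketch, with the only substantive work being the local triviality checks. The paper's argument differs from yours only in that it spells out the step you flag as the main obstacle: having defined the trivialisation $\psi\colon U\times H\to T_U$, $(x,h)\mapsto \sigma(x)h$, it exhibits the inverse explicitly using the smooth division map $\tau\colon T\times_R T\to L$ (with $t_1\tau(t_1,t_2)=t_2$) together with a local section $s\colon V\to H$ of $\beta$, thereby verifying smoothness in the Fr\'echet setting directly rather than leaving it as a remark.
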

\begin{proof}
The correspondence is as described above except that we need to check local triviality. In the forward  direction 
if $S \to M$ is locally trivial we can cover $M$ by open sets $U$ so that $S_U \simeq U \times H$ and thus
$(S/L)_U \simeq U \times H/L$ so that $S_U \to R_U $ is isomorphic as a principal $L$-bundle to $U \times H \to U \times H/L$. As the extension $H \to H/L$ is a locally trivial $L$-bundle the result follows. 

In the backwards direction let $m \in M$ and we want to show that there is an open set $U$ containing $m$ such 
that $T_U \to U$ is a locally trivial $H$-bundle, namely $T_U \simeq U \times H$. First choose $U$ so that $R \to M$ 
is trivial, that is $R_U \simeq U \times K$. As a consequence we have a section $U \to R$ and the pullback of $T \to R$
by that section is locally trivial and thus admits a section in some open neighbourhood of $m$.  So without loss
of generality we can assume that open neighbourhood is $U$ and we have a section $\sigma \colon U \times \{ 1 \} \to T$
of $T $ restricted to $U \times \{1 \}$.  We define
\begin{align*}
\psi \colon U \times H &\to T_U \\
(x, h) &\mapsto \sigma(x) h .
\end{align*}
This is a smooth bijection of right $H$ spaces. It suffices to show that its inverse is also smooth. We do this by covering
$T_U$ with open sets on which the inverse is manifestly smooth. Let $p_0 \in P_U$ with $\pi(p_0) = (m_0, k_0) \in U \times K$. Choose an open neighbourhood $V$ of $k_0$ in $K$ with a local section $s \colon V \to H$ of $\beta$.  Because $T \to R $ is a locally trivial $L$-bundle,
the map $\tau \colon T \times_R T \to L$ defined by $t_1 \tau(t_1, t_2) = t_2$ is smooth. The restriction $\psi^{-1}$  to $V$ is the smooth map   $ V \to U \times \beta^{-1}(V)$ given by $p \mapsto (\pi_U(p), s(\pi_K(p))\tau(\sigma(\pi_K(p))s(\pi_K(p)), p)$, where $\pi_U$ and $\pi_K$ are the natural projections onto $U$ and $K$ respectively. We have established the desired local 
triviality.

\end{proof}

\subsection{The unframed caloron correspondence for fibrations}
\label{sec:unframed-fibrations}
We wish to define the correspondences which establish  the bijection between  isomorphism classes of the
following objects.
\begin{itemize}
\item $G$-bundles $\widetilde P \to Y$ of type $Q \to X$  and;
\item  $\Aut(Q)$-bundles $P \to M $ with an isomorphism of  spaces over $M$ from  $P \times_{\Aut(Q)} X \to M $ to $Y \to M$. 
\end{itemize}

We start with $\widetilde  P \to Y$  a $G$-bundle of type $Q \to X$.   If $Z$ is a space on which $G$ acts on the right (possibly trivially) denote by 
$\Eq	_G(Q, Z)$ the space of all $G$-equivariant maps.  If $G$ acts trivially on $Z$ then $\Eq(Q, Z) = \Map(X, Z)$. 
Thinking of $\Eq(Q,\ )$ as a functor apply it to $\widetilde  P \to Y$. Denote by $\Map^Q(X, Y) \subset \Map(X, Y)$
the image of $\Eq(Q, \widetilde P)$ under the map $\Eq(Q, \widetilde P) \to \Map(X, Y)$. We claim that 
$\Eq(Q, \widetilde P) \to \Map^Q(X, Y)$ is a $\cG$-bundle, noting that $\Eq(Q, G) =\cG$.  To establish local triviality, pick $m \in M$ and 
choose a contractible  open neighbourhood $U$ which can be contracted to $m$.  Because $U$ is contractible it follows that  $Y_U = \pi^{-1}(U) \simeq U \times X$. Moreover we have that the restriction of   $\widetilde P $ to $Y_m \simeq \{m \} \times X$ is  isomorphic to $Q$ and so by contractibility of $U$ it follows that  $\widetilde P_U$, the restriction of $\widetilde P$ to $Y_U$, is diffeomorphic to $U \times Q$.  Then $\Eq(Q, \widetilde P_U) \simeq \Map(X, U) \times \Aut(Q)$ and 
$\Map^Q(X, Y_U) \simeq \Map(X, U) \times \Diff^Q(X)$ and the projection  is the product of the identity on $\Map(X, U)$
with the projection on $\Aut(Q) \to \Diff^Q(X)$.  Local triviality then
follows from the fact that $\Aut(Q) \to \Diff^Q(X)$  is a principal $\cG$-bundle  \cite{Cur}.

Define $\eta \colon F(Y)  \to \Map(X, Y)$ by recalling  that elements of $F_m(Y)$ are maps $X \to Y_m \subset Y$.
Then $\eta^*(\Eq(Q, \widetilde  P) ) \to F(Y)$ is a $\cG$-bundle and $F(Y) \to M$ is a $\Diff^Q(X)$-bundle.  
It follows that there is a projection $\eta^*(\Eq(Q, \widetilde  P) )  \to M$ which we describe as follows. An element of $\eta^*(\Eq(Q, \widetilde P) )$ is a $G$-bundle map $\widehat f\colon Q \to \widetilde P$ covering some frame $f \colon X \to Y$. In fact if $\widetilde P_m$ denotes the restriction of $\widetilde P$ to $Y_m$, then  the fibre of  $\eta^*(\Eq(Q, \widetilde P) )$ above $m \in M$
consists of all $G$-bundle isomorphisms from $Q \to X$ to $\widetilde P \to Y_m$ which cover a frame $X \to Y_m$. 
That is 
$$
(\eta^*(\Eq(Q, \widetilde P) ))_m =  \Eq(Q, \widetilde P_m).
$$

 We can apply 
Proposition \ref{prop:bundles-extensions} to the case of the exact sequence \eqref{eq:basicshortexact} if we can show that the $\cG$-action on $P$ extends to an $\Aut(Q)$-action.  If $\hat \rho \in \Aut(Q)$ is a lift to $\rho \in \Diff^Q(X)$, then we can make $\hat \rho$ act on $\hat f$ by pre-composition and this extends
the $\cG$-action.  We denote
$$
\cC(\widetilde P)  = \eta^*(\Eq(Q, \widetilde P) )
$$
thought of as an $\Aut(Q)$-bundle.  

To see that this is a locally trivial  $\Aut(Q)$-bundle, let us choose again $U \subset M$ contractible so that $Y_U \simeq U \times X$ and $\widetilde P$ restricted to $Y_U$ is diffeomorphic to $U \times Q$. 
 If we consider now the construction just defined, we will see that 
there is a natural isomorphism from $\cC(\widetilde P)$ restricted to $U$ to $U \times \Aut(Q)$ which 
gives a local trivialisation. 

To complete the correspondence as described in the previous Section, we need to describe the isomorphism
$$
\cC(\widetilde P) \times_{\Aut(Q)} X  \to Y
$$
of fibre bundles over $M$.  An element of the first space over $m \in M$ has the form $[(\hat f, f), x]$ where $f \colon X \to Y_m$ is a frame.  We map this to $f(x) \in Y_m$.  The action of $(\hat \rho, \rho)$ is given 
by $((\hat f \circ \hat \rho, f \circ \rho), \rho^{-1}(x) )$ which maps to $f( \rho (\rho^{-1}(x) )) = f(x)$.

Consider now the reverse direction.  We start with an $\Aut(Q)$-bundle $P \to M $ with an isomorphism of  spaces over $M$ from  $P \times_{\Aut(Q)} X \to M $ to $Y \to M$.  Consider the map 
$$
P \times_{\Aut(Q)} Q  \to P \times_{\Aut(Q)} X.
$$
Because $\Aut(Q)$ acts by bundle automorphisms we have a natural action of $G$ on  $P \times_{\Aut(Q)} Q $
by $[p, q] g = [p, qg]$.  It is straightforward to check that this makes 
$$
\cC^{-1}(P) = P \times_{\Aut(Q)} Q  \to P \times_{\Aut(Q)} X  = Y
$$
a $G$-bundle. 

In this case we can assume that locally we have $P \to M$ of the form $U \times \Aut(Q)  \to U$ so that the
bundle
$$
\cC^{-1}(P) = P \times_{\Aut(Q)} Q  \to P \times_{\Aut(Q)} X 
$$
locally looks like
$$
U \times \Aut(Q) \times_{\Aut(Q)} Q  \to U \times \Aut(Q) \times_{\Aut(Q)} X 
$$
or 
$$
U \times Q  \to U \times X.
$$
We can now use the local triviality of $Q \to X$ as a $G$-bundle to  establish that $\cC^{-1}(P) \to Y$ is locally trivial.

The correspondences $\cC$ and $\cC^{-1}$ are best understood as in \cite{Murray-Vozzo1} as functors between
the obvious categories of objects and morphisms although we will not pursue that perspective in the present discussion.  They are not inverses in the set-theoretic sense but only 
in the categorical sense. That is $\cC^{-1} \circ \cC (\widetilde P) \simeq \widetilde P$ and $\cC \circ \cC^{-1} (P) \simeq P$. We construct these isomorphisms next. 

We start with $\widetilde  P \to Y$  a $G$-bundle of type $Q \to X$ and recall that 
$$
\cC(\widetilde P)_m =  \Eq(Q, \widetilde P_m) 
$$
applying $\cC^{-1}$ we have 
$$
\cC^{-1} \circ \cC (\widetilde P) = \Eq(Q, \widetilde P_m)  \times_{\Aut(Q)} Q.
$$
There is a natural map from this space to $\widetilde P_m$ given by $  [\widehat\psi, q] \mapsto \widehat\psi(q)$ which 
defines a smooth isomorphism of $G$-bundles. 

In the other direction let $P \to M$ be an $\Aut(Q)$-bundle. Then
$$
\cC^{-1}(P) = P \times_{\Aut(Q)} Q
$$
so that 
$$
\cC \circ \cC^{-1}(P)_m  = \Eq(Q,  P_m \times_{\Aut(Q)} Q ).
$$
There is a natural map 
$$
P_m \to \Eq(Q,  P_m \times_{\Aut(Q)} Q )
$$
defined by $p \mapsto (q \mapsto [p, q])$ which  extends to an isomorphism of $\Aut(Q)$-bundles. From the categorical viewpoint both these isomorphisms are natural transformations.

\subsection{The unframed caloron correspondence for products}
In the case that the fibration is a product $Y = M \times X$ the caloron correspondence becomes   a bijection between isomorphism classes as follows.

\begin{itemize}
\item $G$-bundles $\widetilde P \to M \times X$ of type $Q \to X$  and;
\item  $\cG$-bundles $P \to M$. 
\end{itemize}

This can be deduced from the general case as follows.  Firstly as $\cG \subset \Aut(Q)$, we can apply the 
construction as before to $P$ to obtain $\widetilde P = \cC^{-1}(P)$.  But in this case we have
$$
\cC^{-1}(P) = (P \times Q)/\cG \to (P \times X)/\cG = M \times X
$$
as $\cG$ acts trivially on $X$.

Secondly, in the other direction, 
consider the fibre of $\cC(P)_m$ which is 
$$
\Eq(Q, \widetilde P_m).
$$
Because $Y = X \times M$ we can  pick out in here the subset of isomorphisms $Q \to P_m$ which 
cover the obvious inclusion $X \to X \times M$, $x \mapsto (x, m)$. This is naturally acted on 
by $\cG$ and the result is a reduction of the $\Aut(Q)$-bundle to $\cG$ which we denote by $\cC(\widetilde P)$. 
Alternatively note that with the previous construction we obtained a $\cG$-bundle 
$$
\eta^*(\Eq(Q, \widetilde P )) \to F(Y) = X \times \Diff^Q(X)
$$
because $Y = X \times M$.  Then we can pull back this $\cG$-bundle with the obvious section of $X \times \Diff^Q(X)$. 
Finally using the map $\bar\eta \colon M \to \Eq(X, M \times X)$ given by 
$m \mapsto (x \mapsto (m, x))$, we get $\cC(\widetilde P) = \bar\eta^*(\Eq(Q, \widetilde P ))$.

\subsection{The framed caloron correspondence for fibrations}
We sketch briefly the correspondences in the framed case.  We want to show that there is  a bijection between isomorphism classes as follows.
\begin{itemize}
\item Framed $G$-bundles $\widetilde P \to Y$ over a framed fibration $Y \to M$ of framed type $Q \to X$  and;
\item  $\Aut_0(Q)$-bundles $P \to M $ with a framed isomorphism of framed spaces over $M$ from  $P \times_{\Aut_0(Q)} X \to M $ to $Y \to M$. 
\end{itemize}

It suffices to show how to define the framings on the transformed objects.  Consider $\cC(\widetilde P)$ whose
fibre at $m \in M$ is 
$$
\cC(\widetilde P)_m =  \Eq(Q, \widetilde P_m).
$$
But now both sides have basepoints so we can restrict to those isomorphisms preserving the basepoints and call
this 
$$
 \Eq_0(Q, \widetilde P_m)  \subset  \Eq(Q, \widetilde P_m) ,
 $$
 which defines a reduction to $\Aut_0(Q)$.  Consider the isomorphism 
 $P \times_{\Aut_0(Q)} X \to M $ to $Y \to M$ which we have defined above to be 
 $$
 [(\hat \psi, \psi), x] \mapsto \psi(x) .
 $$
The framing of the first space is given by the basepoint $ [(\hat \psi, \psi), x_0] $ and $\psi$ preserves
basepoints so that $ [(\hat \psi, \psi), x_0] $ maps to $\psi(x_0)$ which must be the basepoint. 

In the other direction the construction of $\cC^{-1}(P)$ is 
$$
\cC^{-1}(P) = P \times_{\Aut_0(Q)} Q  \to P \times_{\Aut_0(Q)} X  \simeq Y
$$
and both $P \times_{\Aut_0(Q)} Q$ and $P \times_{\Aut_0(Q)} X$ have natural framings coming
from the basepoints of $Q$ and $P$ and the fact that these are preserved by $\Aut_0(Q)$. 

We leave it as an exercise to show that the isomorphisms $\cC \circ \cC^{-1} (P) \simeq P$ and 
$\cC^{-1} \circ \cC (\widetilde P) \simeq \widetilde P$ preserve the framings just defined.

\subsection{The framed caloron correspondence for products}

If the fibration is a product $Y = M \times X$ then this becomes  becomes  a bijection between isomorphism classes as follows.
\begin{itemize}
\item Framed $G$-bundles $\widetilde P \to M \times X$ of type $Q \to X$  and;
\item  $\cG_0$-bundles $P \to M$. 
\end{itemize}

We leave this case also as an exercise for the reader.

\section{The caloron correspondence with connections and Higgs fields}

In this section we will extend the various caloron correspondences from the previous sections to include the data of connections on the bundles involved. 

\subsection{Introduction}
\label{sec:conn-intro}
Before considering how to extend the caloron correspondence to a correspondence for bundles with connections,
we need to recall some facts about principal bundles and to introduce some notation.  If $\pi \colon P \to M$ is a principal $L$-bundle we denote the right action of $l \in L$ 
on $P$ by $R_l \colon P \to P$ and the induced action on forms and tangent vectors  $R_l^*$ and $(R_l)_*$ respectively.   If $\lambda \in \fl = T_e L$ the Lie algebra of $L$, then
$\lambda$ defines a so-called fundamental vector field at any $p \in P$ which we denote by $\iota_p(\lambda)$. Writing $t_0(t \mapsto \gamma(t))$ for the tangent to the map $\gamma$ at $t=0$, we have  
$$
\iota_p(\lambda) = t_0 ( t \mapsto p \exp(t\lambda)) \in T_pP.
$$
If $l \in L$ then $(R_l)_*(\iota_{p}(\lambda)) = t_0 ( t \mapsto p \exp(t\lambda) l) = t_0 ( t \mapsto pl (l^{-1})\exp(t\lambda) l)
=\iota_{pl}(\ad({l^{-1}})(\lambda))$.  A connection one-form $\omega$ on $P$ is an $\fl$-valued one-form on $P$ satisfying
$ \omega_p (\iota_p (\lambda)) = \lambda$ and  $R_l^*(\omega) = \ad(l^{-1}) \omega$.

If $L$ is a subgroup of $H$ which also acts freely on  the right of $P$, extending the action of $L$, the connection $\omega$ is called $H$-invariant 
if it is fixed by $H$. A straightforward calculation shows the following Lemma.
\begin{lemma}
The connection $\omega$ is $H$-invariant if and only if 
$$
R_h^*(\omega) = \ad(h^{-1}) \omega.
$$
\end{lemma}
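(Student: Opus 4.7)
The plan is to interpret ``$\omega$ is fixed by $H$'' as the geometric statement that the horizontal distribution $\ker\omega\subset TP$ is $H$-invariant, i.e.\ $(R_h)_*\ker\omega_p = \ker\omega_{ph}$ for all $h\in H$ and $p\in P$, and then to verify the algebraic identity $R_h^*\omega = \ad(h^{-1})\omega$ by evaluating both sides separately on vertical and horizontal tangent vectors. Since $T_pP$ splits as the sum of $\ker\omega_p$ and the vertical subspace spanned by the fundamental vector fields $\iota_p(\lambda)$, this decomposition suffices.

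For the forward direction, on a horizontal vector $v\in\ker\omega_p$ both $(R_h^*\omega)_p(v) = \omega_{ph}((R_h)_*v)$ and $\ad(h^{-1})\omega_p(v)$ vanish: the former because $(R_h)_*v\in\ker\omega_{ph}$ by the $H$-invariance of the horizontal distribution, the latter because $\omega_p(v)=0$. On a fundamental vector field $\iota_p(\lambda)$, the identity $(R_h)_*\iota_p(\lambda) = \iota_{ph}(\ad(h^{-1})\lambda)$ recorded in the introduction to this section gives
\[
(R_h^*\omega)_p(\iota_p(\lambda)) = \omega_{ph}(\iota_{ph}(\ad(h^{-1})\lambda)) = \ad(h^{-1})\lambda = \ad(h^{-1})\omega_p(\iota_p(\lambda)).
\]
For the converse, if $R_h^*\omega = \ad(h^{-1})\omega$ and $v\in\ker\omega_p$, then $\omega_{ph}((R_h)_*v) = (R_h^*\omega)_p(v) = \ad(h^{-1})\omega_p(v) = 0$, so $(R_h)_*$ sends horizontal vectors to horizontal vectors.

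I do not expect a genuine obstacle; the computation uses only the fundamental vector field identity already established. The one matter of care is that $\ad(h^{-1})$ must preserve $\fl\subset\fh$, which is the normality of $L$ in $H$. That normality is automatic in the situations of interest in the paper, where $L=\cG$ sits as the kernel of $H = \Aut(Q)\to\Diff^Q(X)$, so the hypothesis is consistent with how the lemma will be applied.
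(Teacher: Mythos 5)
Your proof is correct and supplies exactly the ``straightforward calculation'' that the paper omits: you rightly read ``fixed by $H$'' as invariance of the horizontal distribution $\ker\omega$ (the only reading under which the lemma is not vacuous), split $T_pP$ into horizontal and vertical parts, and use the fundamental vector field identity $(R_h)_*\iota_p(\lambda)=\iota_{ph}(\ad(h^{-1})\lambda)$, whose validity for $h\in H$ rests on the normality of $L$ as you note. Nothing is missing.
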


Consider a fibration $\pi \colon Y \to M$ with fibre $X$ and $F(Y)$ its associated frame bundle which is a principal $\Diff^Q(X)$-bundle. We denote by $T^v_yY$ the vertical tangent vectors or the tangents to the fibres of $\pi$ at $y \in Y$. A connection  $a$ on $Y$ is a complementary subspace to $T^v_y$ at every $y$ or, equivalently a projection $v_a \colon T_y Y \to T^v_yY$. A choice of connection on $Y \to M$ defines a connection on $F(Y)$ as follows. The tangent space to $f \in F(Y)$
is the subspace of $\Gamma(X,  f^*(TY))$ of vectors whose projection to $T_{\pi(f)} M$ is constant.  The Lie algebra 
of $\Diff^Q(X)$ is $\Gamma(X, TX)$.  If $\xi \in \Gamma(X,  f^*(TY))$ is a tangent vector then $v_a(\xi(x))$ is 
a vertical vector at $f(x)$. We have $f \colon X \to Y_{\pi(f(x))}$ a diffeomorphism so we can define 
$a_f(\xi) \in \Gamma(X, TX)$ by 
$$
a_f(\xi)(x) = (f^{-1})_*(v_a(\xi(x))).
$$
Equivalently $\xi$ is horizontal  if and only if $\xi(x)$ is horizontal for all $x \in X$.

\subsection{Principal bundles with connection and extensions}

First we need to reconsider the discussion from Section \ref{sec:pb-extensions} taking into account connections on the bundles in question.

  We have the same structure as before. An exact sequence of groups
  $$
1 \to L \xrightarrow{\alpha} H \xrightarrow{\beta} K \to 1  
  $$
and a commutative diagram

  \begin{equation} 
\xy 
(-10,10)*+{S}="1"; 
(0,0)*+{R}="2"; 
(-10,-10)*+{M}="3"; 
(-14,0)*+{H}="4";
(-2,7)*+{L}="5";
(-2,-7)*+{K}="6";
{\ar "1";"2"}; 
{\ar "1";"3"}; 
{\ar   "2";"3"}
\endxy
\end{equation}

If we fix a point $s \in S_m$, the fibre of $S$ above $m \in M$, there is an $H$-equivariant isomorphism

\begin{equation} 
\xy 
(-20,10)*+{H}="1"; 
(-20,-5)*+{K}="2"; 
(-10,10)*+{S_m}="3"; 
(-10,-5)*+{R_m}="4"; 
(10,10)*+{h}="5"; 
(10,-5)*+{k}="6"; 
(30,10)*+{sh}="7"; 
(30,-5)*+{\pi(s)\beta(h)}="8"; 
{\ar "1";"2"};
{\ar "3"; "4"};
{\ar "1"; "3"};
{\ar "2"; "4"};
{\ar@{|-{>}} "5";"6"};
{\ar@{|-{>}} "7"; "8"};
{\ar@{|-{>}}  "5"; "7"};
{\ar@{|-{>}} "6"; "8"}
\endxy
\end{equation}
determined by the choice of $s$. Here $\pi$ is the projection $S \to R$. The $L$-bundle $H \to K$ has two $H$-actions given by left and right multiplication. We are interested in the right
action which does not commute with the $L$-action but extends it.  An $H$-invariant connection for this 
action is determined by its value at the identity in $H$ which is a splitting of the exact sequence 
of Lie algebras
\begin{equation}
\label{eq:Lie-ghf}
0 \to   \fl   \xrightarrow{\a }\fh \xrightarrow{\b} \fk \to 0.
\end{equation}
Let $\Split(\fh, \fk)$ denote the affine space of all right splittings of \eqref{eq:Lie-ghf}.  We define a left action of $H$ on $ \sigma \in \Split(\fh, \fk)$ by 
$h \sigma  = \ad(h) \sigma \ad({\beta(h)^{-1}})$. 
To extend Proposition \ref{prop:bundles-extensions} we first need the following Definition.

\begin{definition}\label{D:Higgs}
If $S \to M$ is an $H$-bundle, a {\em Higgs field} is a section $\Phi$ of the associated
bundle $S \times_H \Split(\fh, \fk)$ or equivalently a function $\Phi \colon S  \to \Split(\fh, \fk)$
satisfying
$$
  \Phi(sh) = \ad(h^{-1})  \Phi(s)   \ad(\b(h)) .
$$
\end{definition}

Recall that a right splitting of the  exact sequence \eqref{eq:Lie-ghf} gives rise to a {\em left splitting} and 
vice-versa.  Occasionally we will need to distinguish between these and we will use the notation $\Phi^r$ and $\Phi^l$ for right and left splittings respectively.  They are related by
\begin{equation}
\label{eq:leftright}
\Phi^r \beta + \alpha \Phi^l = \id_{\fh}.
\end{equation}
For the moment it is most natural to use right splittings for the value of the Higgs field but we warn the reader that
from Proposition \ref{P:higgs-equivariance} onwards we will be assuming the Higgs field is a left splitting. We will also adopt the convention that $\alpha$ is an inclusion and hence not explicitly referred to. 

\begin{proposition}
\label{prop:bundles-extensions-connections}
Fix a principal $K$-bundle $R \to M$ with connection $\omega^K$. We  have a bijective correspondence between isomorphism classes of the following objects:
\begin{enumerate}
\item Principal $H$-bundles $S \to M$, with $S/L \to M$ isomorphic to $R \to M$ as $K$-bundles with connection $\omega^H$, which projects to $\omega^K$ under the isomorphism, and  Higgs field $\Phi$; and
\item Principal $L$-bundles $T \to R$ which are $H$-equivariant for the 
$H$-action on $R$ induced by the $K$-action on $R$ using the homomorphism $\beta$,  with 
connection $\omega^L$ which is $H$-invariant.
\end{enumerate}
\end{proposition}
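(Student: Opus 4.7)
The underlying bijection of principal bundles is already established by Proposition \ref{prop:bundles-extensions}, so the entire content here is the bijective translation of an $H$-connection with Higgs field on the top bundle versus an $H$-invariant $L$-connection on its reduction. Throughout I write $\Phi^r \colon \fk \to \fh$ for the Higgs field viewed as a right splitting and $\Phi^l \colon \fh \to \fl$ for the corresponding left splitting, related by \eqref{eq:leftright}, and I treat $\alpha$ as an inclusion.

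For the forward direction (1)$\to$(2), given $(S,\omega^H,\Phi)$, take $T = S$ viewed as an $L$-bundle over $R = S/L$ and set
$$
\omega^L := \Phi^l \circ \omega^H .
$$
That $\omega^L$ is an $L$-connection is immediate: the fundamental-vector axiom comes from $\Phi^l \circ \alpha = \id_{\fl}$, and $L$-equivariance follows from the $L$-equivariance of $\omega^H$ together with the $h \in L$ case of the transformation law for $\Phi^l$. The $H$-invariance, via the lemma at the end of Section \ref{sec:conn-intro}, amounts to showing $\Phi^l_{sh} \circ \ad(h^{-1}) = \ad(h^{-1}) \circ \Phi^l_s$ for every $h \in H$.

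For the reverse direction (2)$\to$(1), given $(T,\omega^L)$, set $S = T$ and extract the Higgs field by evaluating the connection on $H$-fundamental vector fields: for $s \in S$ and $\mu \in \fh$ put
$$
\Phi^l_s(\mu) := \omega^L(\iota_s(\mu)) .
$$
Applied to $\mu \in \fl$ the connection axiom for $\omega^L$ yields $\Phi^l \circ \alpha = \id$, so this is a left splitting, and the $H$-invariance of $\omega^L$ translates directly into the Higgs-field transformation law of Definition \ref{D:Higgs} (via the identity $\alpha \Phi^l + \Phi^r \beta = \id$). I then define the $H$-connection on $S$ by
$$
\omega^H := \omega^L + \Phi^r \circ \pi^* \omega^K ,
$$
where $\pi \colon S \to R$ is the projection. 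The connection axioms for $\omega^H$ and the fact that $\beta \circ \omega^H$ descends to $\omega^K$ follow from the splitting identities $\Phi^l \alpha = \id$, $\beta \Phi^r = \id_{\fk}$, $\beta \alpha = 0$, and $\alpha \Phi^l + \Phi^r \beta = \id_{\fh}$. That the two constructions are mutual inverses is then a one-line application of the same identities; for instance $\Phi^l(\omega^L + \Phi^r \pi^* \omega^K) = \omega^L$ since $\Phi^l \Phi^r = 0$.

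The single nontrivial calculation, and what I expect to be the main obstacle, is the transformation law
$$
\Phi^l_{sh} = \ad(h^{-1}) \circ \Phi^l_s \circ \ad(h) ,
$$
obtained by substituting $\Phi^r_{sh} = \ad(h^{-1}) \Phi^r_s \ad(\beta(h))$ from Definition \ref{D:Higgs} into $\alpha \Phi^l = \id - \Phi^r \beta$ and using the intertwining $\ad(\beta(h)) \beta = \beta \ad(h)$. This single identity powers both the $H$-invariance of $\omega^L$ in the forward direction and the Higgs-field equivariance in the reverse direction, after which everything else is formal bookkeeping with the four splitting identities above.
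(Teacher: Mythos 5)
Your proof is correct, and it rests on the same central relation as the paper's, namely $\omega^H = \omega^L + \Phi^r(\pi^*\omega^K)$: since the hypothesis that $\omega^H$ projects to $\omega^K$ means $\pi^*\omega^K = \beta\circ\omega^H$, your forward-direction formula $\omega^L = \Phi^l\circ\omega^H$ is just this relation rearranged via \eqref{eq:leftright}, and your verifications (fundamental vectors, equivariance, $H$-invariance) match the paper's. The one place you genuinely diverge is the reverse direction. The paper produces the Higgs field geometrically: it lifts the $K$-fundamental vector $\iota^K_{\pi(s)}(\kappa)$ horizontally with respect to $\omega^L$, observes that the lift is vertical for $S \to M$, and reads off a \emph{right} splitting $\Phi^r(s)$; you instead define the \emph{left} splitting directly by $\Phi^l_s(\mu) = \omega^L(\iota_s(\mu))$. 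These give the same object --- the paper's $\Phi^r_s(\kappa)$ is the unique $\eta$ with $\beta(\eta)=\kappa$ and $\omega^L(\iota_s(\eta))=0$, so $\omega^L(\iota_s(\mu)) = \mu - \Phi^r_s(\beta(\mu))$, which is exactly the associated left splitting --- but your definition makes the transformation law $\Phi^l_{sh} = \ad(h^{-1})\,\Phi^l_s\,\ad(h)$ an immediate consequence of $R_h^*\omega^L = \ad(h^{-1})\omega^L$ together with $\iota_{sh}(\mu) = (R_h)_*\iota_s(\ad(h)\mu)$, whereas the paper must push the horizontal lift through $(R_h)_*$. What the paper's packaging buys is that the horizontal-lift description of $\Phi^r$ is the one used downstream (Proposition \ref{prop:higgs} identifies $\Phi(f)$ with $f^*(\widetilde A)$ precisely via that picture), so if you adopt your definition you should still record its equivalence with the horizontal-lift description. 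You are also right that the single identity $\Phi^l_{sh} = \ad(h^{-1})\Phi^l_s\ad(h)$, equivalent under \eqref{eq:leftright} to the law in Definition \ref{D:Higgs}, powers both directions; the remaining checks (that $\omega^H$ is a connection, that the assignments are mutually inverse) are, as you say, formal consequences of $\Phi^l\alpha = \id$, $\beta\Phi^r = \id$, and $\alpha\Phi^l + \Phi^r\beta = \id$.
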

\begin{proof}
We assume the isomorphisms from Proposition \ref{prop:bundles-extensions} are in place so it is just a question of constructing the connections.  As noted above for convenience we regard $\fl$ as included in $\fh$ and suppress the function $\alpha$.  The connections and Higgs field are then related by the equation
\begin{equation}
\label{eq:connections}
\omega^H = \omega^L + \Phi  (\pi^*\omega^K)
\end{equation}
where we have denoted the projection $S \to R $ by $\pi$ and have used the fact that
$\Phi(s) \colon \fk \to \fh$ for all $s \in S$.    It is straightforward to check 
that $H$-invariance of $\omega^L$ is equivalent to $R_h^* \omega^L = \ad(h^{-1}) \omega^L$.
Notice that this make sense because $\ad(h^{-1})(\fl) = \fl$ and that if  $h \in L$ this is just part of the condition satisfied by any connection.

First we do the forwards direction. Assuming that $\omega^H$ and $\Phi$ are given, we 
show that the one-form $\omega^L$ defined by equation \eqref{eq:connections} is an $H$-invariant connection on $S \to R$.  Let $\lambda \in \fl$ and $s \in S$, then
\begin{align*}
\omega_s^L\big(\iota^L_s(\lambda)\big) &= \omega^H\big(\iota^L_s(\lambda)\big) - \Phi(s)\big( \omega_{\b(s)}^K(\pi_*(\iota^L_s(\lambda)))\big)\\
&=\omega^H\big(\iota^H_s(\a(\lambda))\big) - \Phi(s)\big( \omega_{\b(s)}^K(0)\big)\\
&=\lambda
\end{align*}
as required.  
Letting $h \in H$ and $\xi \in T_s S$ we have
\begin{align*}
\alpha\big( (R_h^* \omega^L_{sh})(\xi)\big) &= \a \big( \omega^L_{sh}((R_h)_*(\xi))\big) \\
 &= \omega^H_{sh}\big((R_h)_*(\xi)\big) - \Phi(sh)\big(\omega^K_{\pi(s)\b(h)}((R_{\b(h)})_*(\pi_*(\xi)))\big) \\
 &= \ad(h^{-1}) \omega^H_s(\xi) - \\
 & \hspace{0.2\textwidth} \ad(h^{-1}) \Phi(s) \ad(\b(h)) 
 \ad(\b(h)^{-1})\omega^K_{\pi(s)}(\pi_*(\xi))\\
 &=\ad(h^{-1}) \big\{ \omega^H_s(\xi)  - \Phi(s)(\pi^*(\omega^K_s(\xi))) \big\}\\
 &= \ad(h^{-1})(\omega^L_s) (\xi).
\end{align*}
Hence $R_h^* \omega^L = \ad(h^{-1}) \omega^L$ and we conclude that $\omega^L$ is an $H$-invariant 
connection on $S \to R$.

Consider the reverse direction.  We are given $\omega^L$ which is $H$-invariant and we wish to 
manufacture $\Phi$ and define $\omega^H$ using equation \eqref{eq:connections}. We define $\Phi$
as follows.  Let $s \in S$ and $\kappa \in \fk$. Consider $\iota^K_{\pi(s)}(\kappa) \in T_{\pi(s)}R$
and lift it to a horizontal vector $\widehat{\iota^K_{\pi(s)}(\kappa)} $ at $s \in S$ using $\omega^L$. The projection of this
vector to $M$ is the projection of $\iota^K_{\pi(s)}(\kappa)$ to $M$ which is zero, so
it must be vertical for $H$.  Hence we can define $\Phi(s) \colon \fk \to \fh$ by 
$$
\iota_s^H(\Phi(s)(\kappa)) = \widehat{\iota^K_{\pi(s)}(\kappa)}.
$$
We need to check that this is a Higgs field. First we check it splits. We have
\begin{align*}
\iota^K_{\pi(s)}(\kappa) &=  \pi_*\big(\widehat{\iota^K_{\pi(s)}(\kappa)}\big) \\
      & = \pi_*\big(\iota_s^H(\Phi(s)(\kappa))\big) \\
      & = \iota_{\pi(s)}^K\big(\beta(\Phi(s)(\kappa))\big),
      \end{align*}
where the last line follows from the fact that the $H$-action on $S$ covers the $H$-action on $R$ induced by $\beta\colon H \to K$. Thus we have
$$
\kappa = \beta\big(\Phi(s)(\kappa)\big) 
$$
as required. Next we check that it transforms correctly:
\begin{align*}
\iota^H_{sh}\big(\ad(h^{-1})\Phi(s)(\kappa)\big) &= (R_h)_*\big(\iota^H_s(\Phi(s)(\kappa))\big)\\
  &= (R_h)_*\big(\widehat{\iota^K_{\pi(s)}(\kappa)}\big)\\
  &= \widehat{(R_{\b(h)})_*\big(\iota^K_{\pi(s)}(\kappa)\big)}\\
  &= \widehat{ \iota^K_{\pi(sh)}\big(\ad(\b(h)^{-1})(\kappa)\big)}\\
  &= \iota_{sh}^H\Big( \Phi(sh)\big((\ad(\b(h)^{-1})(\kappa))\big)\Big)
\end{align*}
hence
$$
  \Phi(sh) = \ad(h^{-1})  \Phi(s) \ad(\b(h)).
$$
It remains to show that  $\omega^H$ defined by equation \eqref{eq:connections} is a connection. First we show that $\omega_s^H(\iota_s^H(\eta)) = \eta$ for all $\eta \in \fh$.  For such a $\eta$ we have
$$
\eta = \zeta + \Phi(s)(\beta(\eta))
$$
for some $\lambda \in \fl$. Again we suppress $\alpha \colon \fl  \to \fh$. It follows that
\begin{align*}
\iota^H_s(\eta) &= \iota^H_s(\lambda) + \iota^H_s(\Phi(s) (\beta(\eta))) \\
      &= \iota^L_s(\lambda) + \widehat{\iota^K_{\pi(s)}(\beta(\eta))}
\end{align*}
and thus we have 
\begin{align*}
\omega_s^H( \iota^H_s(\eta) ) &= \omega_s^L( \iota^H_s(\eta) ) + \Phi(s)(\omega^K_{\pi(s)}\pi_*(\iota^H_s(\eta)))\\
&= \omega_s^L (\iota^L_s(\lambda)) +  \omega_s^L(\widehat{\iota^K_{\pi(s)}(\beta(\eta))}) + \Phi(s)(\omega^K_{\pi(s)}\pi_*(\iota^L_s(\lambda)) \\
&\hspace{0.4\textwidth} + \Phi(s)(\omega^K_{\pi(s)}\pi_* (\widehat{\iota^K_{\pi(s)}(\beta(\eta))})\\
&=\omega_s^L( \iota^L_s(\lambda) ) + 0 + 0 + \Phi(s)(\omega^K_{\pi(s)}(\iota^K_{\pi(s)}(\beta(\eta)))\\
&= \lambda + \Phi(s) ( \beta(\eta))\\
&= \eta.
\end{align*}
Next we establish the right equivariance of the connection form, $R_h^*\omega^H_{sh} = \ad(h^{-1}) \omega^H_s $. Let $\xi \in T_s S$ and write 
$$
\xi = \hat \xi + \iota_s(\lambda)
$$
where $\hat \xi$ is horizontal for $\omega^L$ and $\lambda \in \fl$.  Then 
$$
(R_h)_*(\xi) =  (R_h)_*(\hat \xi) + \iota_{sh}(\ad(h^{-1})(\lambda))
$$
and $(R_h)_*(\hat \xi)$ is horizontal as $\omega^L$ is $H$-invariant.  We have

\begin{align*}
(R_h^*\omega^H_{sh})(\xi) &= \omega^H_{sh}\left((R_h)_*(\xi)\right) \\
         &= \omega^L_{sh}\left((R_h)_*(\xi)\right) + \Phi(sh)\big(\omega^K_{\pi(s)\beta(h)}(\pi_*((R_h)_*(\xi)))\big)\\
         &= \omega^L_{sh}\big((R_h)_*(\hat \xi)\big) + \omega^L_{sh}\big(\iota_{sh}(\ad(h^{-1})(\lambda))\big) + \Phi(sh)\big(\omega^K_{\pi(s)\beta(h)}(\pi_*((R_h)_*(\xi)))\big)\\
         &= 0 + \omega^L_{sh}\left((R_h)_*\iota_{s}(\lambda))\right) + \Phi(sh)\big(\omega^K_{\pi(s)\beta(h)}((R_{\b(h)})_* \pi_*(\xi))\big)\\
         &= \ad(h^{-1}) \omega^L_s(\lambda) + \ad(h^{-1}) \Phi(s)\big( \ad(\b(h)) \ad(\b(h)^{-1}) \omega^K_{\pi(s)}(\pi_*(\xi))\big)\\
         &= \ad(h^{-1}) \omega^L_s(\xi)  + \ad(h^{-1}) \Phi(s)\big(  \omega^K_{\pi(s)}(\pi_*(\xi))\big)\\
         &= \ad(h^{-1})\omega^H_s (\xi)
\end{align*}
as required.
         
We leave it as an exercise for the reader to show that when we apply the bijection twice the isomorphisms 
introduced in Proposition \ref{prop:bundles-extensions} preserve the connections defined here.
\end{proof}

For the caloron correspondence we need the following slightly more complicated version of Proposition \ref{prop:bundles-extensions-connections}. Choose an affine subspace  $\Split(\fh, \fk)_0 \subset \Split(\fh, \fk)$ which is invariant under the action of $H$. Using the same 
notation as before we say a connection $\omega^L$ is a $\Split(\fh, \fk)_0$-connection if the corresponding Higgs field takes 
its values in $\Split(\fh, \fk)_0$.  It is then obvious that we have

\begin{corollary}
\label{cor:bundles-extensions-connections}
Fix a principal $K$-bundle $R \to M$ with connection $\omega^K$. We  have a bijective correspondence between isomorphism classes of the following objects:
\begin{enumerate}
\item Principal $H$-bundles $S \to M$, with $S/L \to M$ isomorphic to $R \to M$ as $K$-bundles with connection $\omega^H$, which projects to $\omega^K$ under the isomorphism, and  Higgs field $\Phi \colon S \to \Split(\fh, \fk)_0$; and
\item Principal $L$-bundles $T \to R$ which are $H$-equivariant for the 
$H$-action on $R$ induced by the $K$-action on $R$ using the homomorphism $\beta$,  with 
a $\Split(\fh, \fk)_0$-connection $\omega^L$ which is $H$-invariant.
\end{enumerate}
\end{corollary}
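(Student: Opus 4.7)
The plan is to leverage Proposition \ref{prop:bundles-extensions-connections} directly and observe that the asserted restriction to $\Split(\fh, \fk)_0$ is essentially a tautology, modulo one compatibility check. The proposition already establishes a bijection between pairs $(\omega^H, \Phi)$ on $S \to M$ and $H$-invariant connections $\omega^L$ on $T \to R$, governed by
$$\omega^H = \omega^L + \Phi(\pi^*\omega^K),$$
with $\Phi$ recovered from $\omega^L$ by horizontally lifting fundamental vector fields for $K$ and reading off the resulting $H$-vertical vector. So the only thing to verify is that the subclass of Higgs fields with values in $\Split(\fh, \fk)_0$ matches, under this bijection, the subclass of $\Split(\fh, \fk)_0$-connections. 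By definition of $\Split(\fh, \fk)_0$-connection, these two conditions are literally the same condition on $\Phi$, so this step is immediate once the bijection of the proposition is in hand.

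First I would check that the condition ``$\Phi$ takes values in $\Split(\fh, \fk)_0$'' is a well-posed equivariant condition on a Higgs field, i.e.\ that it cuts out a genuine subset of sections of the associated bundle $S \times_H \Split(\fh, \fk)$. For this I use the hypothesis that $\Split(\fh, \fk)_0$ is $H$-invariant: the equivariance law $\Phi(sh) = \ad(h^{-1})\Phi(s)\ad(\b(h))$ then preserves the subset $\Split(\fh, \fk)_0$, so $\Phi$ may be consistently constrained to land there, giving a section of the sub-bundle $S \times_H \Split(\fh, \fk)_0$. Next, in the forward direction, given $(\omega^H, \Phi)$ with $\Phi$ taking values in $\Split(\fh,\fk)_0$, I would define $\omega^L$ by \eqref{eq:connections} exactly as before and invoke Proposition \ref{prop:bundles-extensions-connections} to conclude $\omega^L$ is $H$-invariant; the Higgs field recovered from $\omega^L$ by the horizontal-lift procedure agrees with $\Phi$ (this is part of the proposition), and hence lies in $\Split(\fh, \fk)_0$, so $\omega^L$ is a $\Split(\fh, \fk)_0$-connection by definition. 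In the reverse direction, I start from a $\Split(\fh, \fk)_0$-connection $\omega^L$, extract its Higgs field $\Phi$ (which, by assumption, lies in $\Split(\fh, \fk)_0$), and set $\omega^H = \omega^L + \Phi(\pi^*\omega^K)$; the proposition again ensures this is a connection with the required projection property.

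There is no substantial obstacle: the only non-trivial content beyond the proposition is the observation in the previous paragraph that $H$-invariance of $\Split(\fh, \fk)_0$ is precisely what makes the restricted Higgs field condition compatible with the equivariance law, so that the restriction makes categorical sense on both sides of the bijection. Everything else is a direct restriction of the bijection already proved, which is why the statement is flagged as obvious in the text.
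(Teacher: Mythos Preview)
Your proposal is correct and matches the paper's approach: the paper gives no proof at all, merely stating ``It is then obvious that we have'' after defining a $\Split(\fh,\fk)_0$-connection as one whose associated Higgs field lands in $\Split(\fh,\fk)_0$. Your write-up simply spells out why this is obvious, invoking Proposition~\ref{prop:bundles-extensions-connections} and noting that $H$-invariance of $\Split(\fh,\fk)_0$ makes the restricted condition compatible with equivariance---which is exactly the intended reasoning.
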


\subsection{The unframed caloron correspondence for fibrations with connections and Higgs fields}

Let  $a$ be a connection on $F(Y) \to M$, where 
$F(Y)$ is the principal $\Diff^Q(X)$-bundle associated to $Y$. This induces a connection also on $Y$. 
We  wish to show that there is a  bijection between isomorphism classes of 
\begin{itemize}
\item $G$-bundles $\widetilde P \to Y$ of type $Q \to X$  with connection $\widetilde A$ and connection $a$ for $F(Y) \to M$ and;
\item  $\Aut(Q)$-bundles $P \to M $ with Higgs field $\Phi$  and connection $A$ and isomorphism of $X$ fibrations from  $P \times_{\Aut(Q)} X$ to $Y$ which sends the connection $A$ to $a$. 
\end{itemize}

We will prove this result using the constructions from Section \ref{sec:unframed-fibrations} and Corollary \ref{cor:bundles-extensions-connections}. Recall that in that case the extension of groups is given by \eqref{eq:basicshortexact}
\begin{equation*}
1 \to \cG \to \Aut(Q) \to \Diff^Q(X) \to 1  .
\end{equation*}
The corresponding sequence of Lie algebras is
\begin{equation}
\label{eq:basicshortexactla}
0 \to \Gamma(X, \ad(Q)) \to \Gamma(X, TQ/G) \to \Gamma(X, TX) \to 0  ,
\end{equation}
which is the functor $\Gamma(X, \ )$ applied to the  Atiyah sequence \cite{Ati} of $Q \to X$ given by 
\begin{equation}
\label{eq:atiyahsequence}
0 \to\ad(Q) \to TQ/G \to TX \to 0  .
\end{equation}

\begin{remark}
\label{rem:inclusion}
We note for later use that we can identify $\Gamma(X, \ad(Q)) = \Gamma_G(Q, \fg)$, the space
of $G$-equivariant maps from $Q$ into $\fg$ and $\Gamma(X, TQ/G) = \Gamma_G(Q, TQ)$, the space
of $G$-equivariant vector fields on $Q$. The inclusion map of the former into the latter then maps a 
function $\mu \colon Q \to \fg$ into the vector field $q \mapsto \iota_q(\mu(q))$. 
\end{remark}

Recall that a  connection on $Q \to X$ is a splitting of the Atiyah sequence \eqref{eq:atiyahsequence} with a right 
splitting corresponding to a horizontal distribution and a left splitting to a connection one-form on $Q$. 
A splitting of \eqref{eq:atiyahsequence} also  induces a splitting of \eqref{eq:basicshortexactla}, which 
in turn induces an $\Aut(Q)$-invariant connection on $\Aut(Q) \to \Diff^Q(X)$.  We take as $\Split(\Gamma(X, TQ/G), \Gamma(X, TX))_0$ 
only those splittings arising in this way and we denote them by $\cA$.   It is straightforward to check that $\cA$ is an  affine subspace invariant under $\Aut(Q)$ and we  are in the setting of Corollary \ref{cor:bundles-extensions-connections}. Note also that in this situation a Higgs field is an $\Aut(Q)$-equivariant map from $P$ into splittings of the sequence (\ref{eq:basicshortexactla}). In fact, as we shall see below the construction of the Higgs field from the proof of Proposition \ref{prop:bundles-extensions-connections} adapted to this context naturally takes values in $\Split(\Gamma(X, TQ/G), \Gamma(X, TX))_0 = \cA$. Thus, we can view the Higgs field as an $\Aut(Q)$-equivariant map $P \to \cA$, or equivalently a section of $P \times_{\Aut(Q)} \cA$.

\begin{proposition}
\label{prop:higgs}
Let $f \in \Eq (Q, \widetilde P)$ and $\Phi$ be the Higgs field as constructed in Proposition  \ref{prop:bundles-extensions-connections}. The value of $\Phi(f)$ is the connection $f^{*}(\widetilde A)$. 
\end{proposition}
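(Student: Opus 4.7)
The plan is to unpack the general construction of $\Phi$ given in Proposition \ref{prop:bundles-extensions-connections} for the present setting and recognise the result as a splitting of the Atiyah sequence \eqref{eq:atiyahsequence}. In this instance the extension of Fr\'echet Lie groups is $1 \to \cG \to \Aut(Q) \to \Diff^Q(X) \to 1$, the induced Lie algebra sequence \eqref{eq:basicshortexactla} is the Atiyah sequence of $Q \to X$ read fibrewise, and the principal $\cG$-bundle $T \to R$ of Proposition \ref{prop:bundles-extensions-connections} is $\cC(\widetilde P) \to F(Y)$. The relevant input $\omega^L$ is the natural connection induced from $\widetilde A$: under the identification of a tangent vector $\xi \in T_{\hat f}\cC(\widetilde P)$ with a $G$-equivariant section $q \mapsto \xi(q) \in T_{\hat f(q)} \widetilde P$ along $\hat f \colon Q \to \widetilde P$, one sets $\omega^L(\xi)(q) = \widetilde A_{\hat f(q)}(\xi(q)) \in \fg$, yielding an element of $\fl = \Gamma_G(Q, \fg)$.

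With this in place I identify the fundamental vector fields explicitly. For $\kappa \in \fk = \Gamma(X, TX)$ the right $\Diff^Q(X)$-action on $F(Y)$ is precomposition, so $\iota^K_f(\kappa)(x) = f_*(\kappa(x))$ is a vertical section of $f^{*}TY$. Analogously, for $\eta \in \fh = \Gamma_G(Q, TQ)$ one has $\iota^H_{\hat f}(\eta)(q) = \hat f_*(\eta(q))$. By the definition of $\omega^L$, the horizontal lift $\widehat{\iota^K_f(\kappa)}$ at $\hat f$ is the unique $G$-equivariant section $\xi$ along $\hat f$ satisfying $\widetilde A(\xi(q)) = 0$ for all $q$ and projecting to $\iota^K_f(\kappa)$; in other words, $\xi(q)$ is the $\widetilde A$-horizontal lift at $\hat f(q)$ of $\hat f_*(\kappa(\pi_Q(q)))$. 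Comparing with $\iota^H_{\hat f}(\Phi(\hat f)(\kappa))(q) = \hat f_*(\Phi(\hat f)(\kappa)(q))$ and applying $(\hat f^{-1})_*$ pointwise shows that $\Phi(\hat f)(\kappa)(q) \in T_qQ$ is precisely the horizontal lift of $\kappa(\pi_Q(q))$ for the pulled-back connection $\hat f^{*}(\widetilde A)$ on $Q$. As a right splitting of the Atiyah sequence, this is exactly the splitting determined by $\hat f^{*}(\widetilde A) = f^{*}(\widetilde A)$; in particular $\Phi(\hat f)$ automatically lies in the affine subspace $\cA$ of splittings coming from connections on $Q$, and the proposition follows.

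The main obstacle is conceptual rather than computational: one must juggle two equivalent descriptions of a tangent vector to $\cC(\widetilde P)$ — first as a $G$-equivariant vector field along $\hat f$ with values in $T\widetilde P$, and second as the value of a fundamental vector field generated by some element of $\Gamma_G(Q, TQ)$ — in such a way that the horizontality condition imposed by $\omega^L$ on $\cC(\widetilde P) \to F(Y)$ correctly translates into the pointwise horizontality condition for $\widetilde A$ on $\widetilde P$. Once these identifications are pinned down, the proof is a direct unwinding of definitions with no further calculation required.
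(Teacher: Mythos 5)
Your proof is correct and follows essentially the same route as the paper: both unwind the horizontal-lift construction of $\Phi$ from Proposition \ref{prop:bundles-extensions-connections} in this setting, identify the fundamental vector fields for the precomposition actions, and observe that the horizontality condition $\widetilde A_{\hat f(q)}(\xi(q))=0$ translates into $\hat f^{*}(\widetilde A)$-horizontality on $Q$. The only cosmetic difference is that the paper phrases the fundamental vector fields via one-parameter families $\chi_t$, $\hat\chi_t$, whereas you write them directly as pushforwards $f_*(\kappa(x))$ and $\hat f_*(\eta(q))$, which amounts to the same thing.
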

\begin{proof}
We have $f \colon Q \to \widetilde P_m$ covering $\bar f \colon X \to Y_m$. Following the definition of the Higgs field 
above we take $\mu \in \Gamma(X, TX)$ and choose a one-parameter family of diffeomorphims in $\Diff^Q(X)$, $\chi_t \colon X \to X$ such that $\chi_0 = \id_X$ and $\chi_t'(0) = \mu$. We have
$$
\iota_f(\eta) = t_0( t \mapsto f \circ \chi_t) \in T_f(F(Y)).
$$
The construction of $\Phi(f)$ implies that there is a  one-parameter family of bundle automorphisms $\hat\chi_t \colon Q \to Q$ such that $\hat\chi_0 = \id_Q$, $\hat\chi_t$ covers $\chi_t$ and $\Eq(Q, \widetilde A)_f ((f\circ \hat\chi_t)'(0)) = 0$ which is equivalent to 
 $\widetilde A_{f(q)} ( (f\circ \hat\chi_t(q))'(0)) = 0$ for all $q \in Q$.   The splitting $\Phi( f)$ of \eqref{eq:basicshortexactla} is therefore the 
map $\mu \mapsto {\hat\chi}_t'(0)$.  But because $\widetilde A_{f(q)} ( (f\circ \hat\chi_t(q))'(0)) = 0$ for all $q \in Q$,
we see that this must be a splitting in $\cA$ and one which is associated to the connection one-form $f^{*}(\widetilde A)$.
\end{proof}

We also note that in this situation the equivariance condition on the Higgs field is particularly nice.

\begin{proposition}\label{P:higgs-equivariance}
Let $\Phi$ be a Higgs field for an $\Aut(Q)$-bundle $P$, viewed as a map $P \to \cA$. Then the condition from Definition \ref{D:Higgs} 
is equivalent to
$$
\Phi(p\psi) = \psi^* \Phi(p),
$$
where $\psi \in \Aut(Q)$ and $\beta \colon \Aut(Q) \to \Diff^Q (X)$.
\end{proposition}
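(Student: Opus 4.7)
The plan is to reduce the abstract equivariance of Definition \ref{D:Higgs} to the concrete pullback identity $\Phi(p\psi) = \psi^* \Phi(p)$ by unwinding what the adjoint action of $\Aut(Q)$ on the Lie algebras in the sequence (\ref{eq:basicshortexactla}) really is. Because (as the paper remarks just before the statement) we are now viewing $\Phi(p) \in \cA$ as a \emph{left} splitting of the Atiyah sequence (\ref{eq:atiyahsequence}), the first task is to rewrite Definition \ref{D:Higgs} for left splittings: using (\ref{eq:leftright}) together with the fact that $\beta$ intertwines the adjoint actions, a short algebraic manipulation shows that $\Phi^r(p\psi) = \ad(\psi^{-1})\Phi^r(p)\ad(\beta(\psi))$ is equivalent to $\Phi^l(p\psi) = \ad(\psi^{-1})\Phi^l(p)\ad(\psi)$.

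The second step is to identify the relevant adjoint actions explicitly. Using Remark \ref{rem:inclusion} I would identify $\Gamma(X,TQ/G) = \Gamma_G(Q, TQ)$ and $\Gamma(X, \ad(Q)) = \Gamma_G(Q, \fg)$. Since $\Aut(Q) \subset \Diff(Q)$ as a Fr\'echet Lie subgroup, the adjoint action of $\psi \in \Aut(Q)$ on a $G$-invariant vector field is just pushforward, $\ad(\psi)\xi = \psi_*\xi$; tracing this through the embedding $\mu \mapsto (q \mapsto \iota_q\mu(q))$ from Remark \ref{rem:inclusion} yields $\ad(\psi^{-1})\mu = \psi^*\mu$ on $\Gamma_G(Q,\fg)$, i.e.\ ordinary pullback of equivariant functions.

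Finally, a connection one-form $A$ on $Q$, viewed as a left splitting $\xi \mapsto A(\xi)$, satisfies $(\psi^* A)(\xi) = A(\psi_* \xi) \circ \psi$, which in terms of the two adjoint actions just computed is exactly $\ad(\psi^{-1})\,A\,\ad(\psi)\,\xi$. Consequently the left-splitting form of the Higgs equivariance reads $\Phi(p\psi) = \psi^*\Phi(p)$, and since each step above is an equivalence the reverse direction follows automatically. The only real obstacle is bookkeeping: keeping straight the distinction between left and right splittings and verifying carefully that the adjoint action of $\Aut(Q)$ on the two Lie algebras really does agree with pushforward, respectively pullback, by the underlying diffeomorphism of $Q$. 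Once that dictionary is in place the statement is essentially a rewriting of the definitions.
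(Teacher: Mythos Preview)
Your proposal is correct and follows essentially the same line as the paper: both arguments rest on identifying the adjoint action of $\Aut(Q)$ on $\Gamma_G(Q,TQ)$ with pushforward and on $\Gamma_G(Q,\fg)$ with pullback, and then matching this against the definition of $\psi^*\Phi$. The only organisational difference is in how the passage from right to left splittings is handled. You first use \eqref{eq:leftright} to convert the right-splitting equivariance $\Phi^r(p\psi)=\ad(\psi^{-1})\Phi^r(p)\ad(\beta(\psi))$ into the left-splitting condition $\Phi^l(p\psi)=\ad(\psi^{-1})\Phi^l(p)\ad(\psi)$, and then identify the latter with $\psi^*\Phi^l$ directly. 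The paper instead characterises the left splitting corresponding to $\ad(\psi^{-1})\Phi^r(p)\ad(\beta(\psi))$ by its kernel: it checks that $\psi^*\Phi^l$ is a connection one-form and that it annihilates the image of the transformed right splitting, which pins it down uniquely. Your algebraic route via \eqref{eq:leftright} is arguably a bit more direct and makes the equivalence of the two conditions transparent, while the paper's kernel argument is slightly more geometric; either way the substance is the same.
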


\begin{proof}
First note that if we view $\Phi$ as a connection one-form for each $p \in P$, then what we called the Higgs field in Definition \ref{D:Higgs} is really the associated splitting $\Gamma(X, TX) \to \Gamma(X, TQ/G)$. That is, the Higgs field is the assignment of horizontal subspaces of $TQ$ at each point in $Q$. To avoid confusion for this proof we denote this splitting $\Phi^r$ and the connection one-form by $\Phi^l$. They are related by \eqref{eq:leftright}. Then by definition the image of $\Phi^r$ is the kernel of the one-form $\Phi^l$.  

We wish to show that the connection one-form corresponding to the splitting $\ad(\psi^{-1}) \Phi^r \ad (\beta(\psi))$ is $\psi^*\Phi^l$.  It is straightforward to check that $\psi^*\Phi^l$ is a connection one-form so we just need to show that  $\psi^* \Phi^l$ annihilates $\big(\ad(\psi^{-1}) \Phi^r \ad (\beta(\psi))\big)(\xi)$ for $\xi$ a vector field on $X$. The adjoint action of $\Aut(Q)$ on an element $\mu$ in its Lie algebra, $\Lie(\Aut(Q)) = \Gamma(X, TQ/G)$ is given by $\ad(\psi)(\mu)_q = \psi_* \mu_{\psi^{-1}(q)}$. Similarly, if $\xi \in \Lie(\Diff^Q(X))$ then $\ad(\beta(\psi))(\xi)_x = \beta(\psi)_* \xi_{\beta(\psi)^{-1}(x)}.$ Notice that this implies that if $\omega$ is a one-form on $Q$ and $\zeta $ is a vector field on $Q$ then $(\psi^*\omega)_q (\ad (\psi^{-1})\zeta_q) = \omega_{\psi(q)}(\psi_* (\psi^{-1}_{{\phantom-}*} \zeta)_{\psi(q)}) = \omega_{\psi(q)}(\zeta_{\psi(q)})$.  Letting $\zeta = \Phi^H \ad (\beta(\psi))(\xi)$ 
we note that $\Phi(\zeta) = 0$ and then we have 
$$
(\psi^*\Phi^l)\big(\ad(\psi^{-1}) \Phi^r \ad (\beta(\psi))\big)(\xi) =   \Phi^l (\zeta)  = 0
$$
as required.
 \end{proof}
 
We note that from now on the default choice for the value of a Higgs field $\Phi$ will be that it is a connection one-form, that is a left-splitting. When this is not the case we will write $\Phi^r$. 

Now we are in a position to extend the caloron correspondence to include connections and Higgs fields. We start with a $G$-bundle $\widetilde P \to Y$ of type $Q \to X$. We have seen above that $\Eq(Q, \widetilde P) \to \Map^Q(X, Y)$ is a
$\cG$-bundle.  We can apply the functor $\Eq(Q, \ )$ to the connection $\widetilde A$ to obtain a connection $\Eq(Q, \widetilde A)$. 
Indeed if $\rho \in T_f(\Eq(Q, \widetilde P)) $ then $\rho \in \Gamma_G(Q, f^{*}(T\widetilde P))$, so for any $q \in Q$ we have $\rho_q \in T_{f(q)}\widetilde P$ and we 
define
$$
\Eq(Q, \widetilde A)_f(\rho) = (q \mapsto  \widetilde A_{f(q)}(\rho_q)) \in \Gamma(X, \ad(Q)).
$$

\begin{proposition}
In the situation just described $\Eq(Q, \widetilde A) $ is an $\cA$-connection which is $\Aut(Q)$-invariant  on the $\cG$-bundle $\Eq(Q, \widetilde P) \to \Map^Q(X, Y)$. 
\end{proposition}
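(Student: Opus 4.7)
The plan is to verify three items in turn: (i) that $\Eq(Q,\widetilde A)$ is a connection one-form on the $\cG$-bundle $\Eq(Q,\widetilde P)\to\Map^Q(X,Y)$; (ii) that it is invariant under the lifted $\Aut(Q)$-action $f\mapsto f\circ\hat\psi$; and (iii) that its associated Higgs field takes values in $\cA$, so that it is an $\cA$-connection in the sense introduced just before Corollary~\ref{cor:bundles-extensions-connections}.

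For (i) I would check the three defining properties pointwise on $Q$. A tangent vector $\rho\in T_f\Eq(Q,\widetilde P)=\Gamma_G(Q,f^*T\widetilde P)$ is $G$-equivariant; combined with $R_g^*\widetilde A=\ad(g^{-1})\widetilde A$ this forces $q\mapsto\widetilde A_{f(q)}(\rho_q)$ to be a $G$-equivariant map $Q\to\fg$, hence a section of $\ad(Q)$, i.e.\ an element of $\Lie(\cG)$. For $\lambda\in\Lie(\cG)=\Gamma_G(Q,\fg)$ the $\cG$-action is pointwise, $(f\cdot g)(q)=f(q)g(q)$, so $\iota_f(\lambda)_q=\iota_{f(q)}(\lambda(q))$, and the defining property $\widetilde A_{f(q)}(\iota_{f(q)}(\lambda(q)))=\lambda(q)$ gives the normalisation. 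Right equivariance under $\cG$ is the pointwise application of $R_g^*\widetilde A=\ad(g^{-1})\widetilde A$.

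For (ii) I would use the Lemma in Section~\ref{sec:conn-intro}. The derivative of $R_{\hat\psi}\colon f\mapsto f\circ\hat\psi$ sends $\rho$ to the precomposition $\rho\circ\hat\psi$, and so
\begin{equation*}
\bigl(R_{\hat\psi}^*\Eq(Q,\widetilde A)\bigr)_f(\rho)(q)=\widetilde A_{f(\hat\psi(q))}(\rho_{\hat\psi(q)})=\Eq(Q,\widetilde A)_f(\rho)\bigl(\hat\psi(q)\bigr).
\end{equation*}
From the adjoint-action formula of Proposition~\ref{P:higgs-equivariance}, $\ad(\hat\psi)\mu_q=\hat\psi_*\mu_{\hat\psi^{-1}(q)}$, restricted along the inclusion $\Gamma_G(Q,\fg)\hookrightarrow\Gamma_G(Q,TQ)$ of Remark~\ref{rem:inclusion}, one finds that on $\Lie(\cG)$ the adjoint action $\ad(\hat\psi^{-1})$ is exactly precomposition with $\hat\psi$. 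The displayed identity is therefore $R_{\hat\psi}^*\Eq(Q,\widetilde A)=\ad(\hat\psi^{-1})\Eq(Q,\widetilde A)$, which by the Lemma is $\Aut(Q)$-invariance.

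For (iii) I would appeal to Proposition~\ref{prop:higgs}: once $\Aut(Q)$-invariance is in hand, Proposition~\ref{prop:bundles-extensions-connections} produces a Higgs field $\Phi$, and Proposition~\ref{prop:higgs} identifies $\Phi(f)=f^{*}(\widetilde A)$, a connection one-form on $Q\to X$ and hence the splitting of the Atiyah sequence \eqref{eq:atiyahsequence} it determines. By the definition of $\cA$ this means $\Phi(f)\in\cA$, i.e.\ $\Eq(Q,\widetilde A)$ is an $\cA$-connection. The main bookkeeping obstacle is the identification of $\ad(\hat\psi^{-1})$ on $\Lie(\cG)$ via the inclusion into $\Lie(\Aut(Q))$; once this is set up, each property reduces to the pointwise application of the corresponding property of $\widetilde A$.
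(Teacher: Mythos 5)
Your proposal is correct and follows essentially the same route as the paper: verify the reproducing property on fundamental vector fields via $\iota_f(\mu)(q)=\iota_{f(q)}(\mu(q))$, identify the adjoint action of $\Aut(Q)$ on $\Lie(\cG)$ as precomposition so that $R_{\hat\psi}^*\Eq(Q,\widetilde A)=\ad(\hat\psi^{-1})\Eq(Q,\widetilde A)$, and invoke Proposition~\ref{prop:higgs} for the $\cA$-valuedness of the Higgs field. The only cosmetic difference is that you check $\cG$-equivariance separately before the full $\Aut(Q)$-invariance, whereas the paper lets the latter subsume the former.
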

\begin{proof}
Consider first what happens when we apply  $\Eq(Q, \widetilde A)$ to a vertical vector in the tangent space to $\Eq(Q, \widetilde P)$
at $f$. The tangent space to $f$ is $\Gamma_G(Q, f^{*}(TQ))$ and  a vertical vector is  generated by an element of the Lie algebra of $\cG$ which is given by an equivariant map $\mu \colon Q \to \fg$.  It is a straightforward exercise to check that
$$
\iota_f(\mu)(q) = \iota_{f(q)} \mu(q)
$$
and then 
$$
\Eq(Q, \widetilde A)_f (\iota_f(\mu)) = 
q \mapsto \widetilde A_{f(q)}(\iota_{f(q)} \mu(q) )= \mu(q)
$$
or $\Eq(Q, \widetilde A)_f (\iota_f(\mu)) = \mu$. 

To check invariance under $\Aut(Q)$ we first need to understand the adjoint action of $\Aut(Q)$ on $\cG$ and its Lie algebra. If $g \in \cG$
then $g \colon Q \to G$ and it acts on $Q$ by $q \mapsto q g(q)$.  If $\psi \in \Aut(Q)$ then $\psi^{-1} g \psi$ acts on $Q$ by sending 
$q \in Q$ to 
$\psi^{-1} ( \psi(q) g (\psi(q))) = q g(\psi(q))$ or $\ad(\psi^{-1})(g) = g \circ \psi$. Hence if $\mu \colon Q \to \fg$ is in the Lie algebra of 
$\cG$ we have $\ad(\psi^{-1})(\mu) = \mu \circ \psi$. 
Let $f_t \in \Eq(Q, \widetilde P)$ with $f_0 = f$ and $\rho = t_0(t \mapsto f_t) \in T_f (\Eq(Q, \widetilde P))$. We have 
\begin{align*}
R^*_{\psi} ( \Eq(Q, \widetilde A))_{f} (\rho)(q)  & = \Eq(Q, \widetilde A)_{f \circ \psi} (t_0(t \mapsto f_t \circ \psi)) (q)\\
                     &=  \widetilde A_{f(\psi(q))}\big(t_0(t \mapsto f_t(\psi(q)))\big) \\
                     &= \Eq(Q, \widetilde A)_f (\rho)(\psi(q))\\
                     &= \ad({\psi^{-1}(q)})\Eq(Q, \widetilde A)_f (\rho)
                     \end{align*}
                    as required. Proposition \ref{prop:higgs} implies that the associated Higgs field is valued in $\cA$ and so this is an $\cA$-connection.
\end{proof}

 The pullback of the connection $\Eq(Q, \widetilde A) $ using $\eta \colon F(Y) \to \Map(X, Y)$ is therefore
an $\Aut(Q)$-invariant connection on the $\cG$-bundle $\eta^*(\Eq(Q, P)) \to F(Y)$.    

It follows from Corollary \ref{cor:bundles-extensions-connections}  that we have a connection $A$ on $P \to M$ defined by 
\begin{equation}
\label{eq:tildeconnection}
A = \eta^*(\Eq(Q, \widetilde A)) + \Phi^r(\pi^*a)
\end{equation}
where $\pi \colon P \to F(Y)$. 

Putting this all together with the results from Section \ref{sec:conn-intro} we have the following Proposition.

\begin{theorem}
\label{thm:fib-unframed-conn-higgs}
Let  $\widetilde P \to Y$ be a $G$-bundle  of type $Q \to X$  with connection $\widetilde A$  and let  $a$ be a connection for $Y \to M$. Then the $\Aut(Q)$-bundle $P \to M$ has connection $A$ and Higgs field $\Phi$ defined as follows. 
 \begin{itemize}
\item Let $f \in P$ so that $f \colon Q \to \widetilde P_m$ for some $m \in M$, then 
 $\Phi(f) = f^*(\widetilde A)$;
\item 
 If $\rho \in  \Gamma(Q, f^*(T\widetilde P))$, let $\bar f \colon X \to Y_m$ and $\bar\rho \in \Gamma(X, \bar f^*(TY))$ be the projections of $f$ and $\rho$ respectively.  Then $A_f(\rho) \in \Gamma_G(Q, TQ)$ is given by 
$$
A_f(\rho)(q) = \widetilde A _{f(q)}(\rho(q)) + \Phi^r(f) \big(x \mapsto  ({\bar f}^{-1})_*(v_a(\bar \rho (x)))\big)(q) 
$$
where, as before,  $\Phi^r(f)\colon \Gamma(X, TX) \to \Gamma_G(Q, TQ)$ is the right splitting induced by the connection 
$\Phi(f) = f^{*}(\widetilde A)$. 
\end{itemize}
\end{theorem}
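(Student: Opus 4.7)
The plan is to assemble the formula from three pieces already established in the preceding discussion: the explicit identification of the Higgs field given by Proposition \ref{prop:higgs}, the connection-and-Higgs decomposition of Corollary \ref{cor:bundles-extensions-connections} specialised via equation \eqref{eq:tildeconnection}, and the explicit description of the connection $a$ on $F(Y)$ recorded in Section \ref{sec:conn-intro}. In effect the theorem is the unpacking of all these earlier general results into the notation of the caloron correspondence, so no genuinely new computation is needed.

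First, the statement about the Higgs field is immediate. Pulling back $\Eq(Q,\widetilde A)$ by $\eta$ produces an $\Aut(Q)$-invariant $\cA$-connection on the $\cG$-bundle $P \to F(Y)$, placing us exactly in the framework of Corollary \ref{cor:bundles-extensions-connections}. The Higgs field is then the one manufactured in the proof of Proposition \ref{prop:bundles-extensions-connections}, and Proposition \ref{prop:higgs} identifies its value at $f \in P$ as the pullback connection $f^*(\widetilde A)$, which is the first bullet.

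For the second bullet, I would evaluate each summand of \eqref{eq:tildeconnection}, namely $A = \eta^*(\Eq(Q,\widetilde A)) + \Phi^r(\pi^* a)$, on a tangent vector $\rho \in T_f P = \Gamma_G(Q, f^*(T\widetilde P))$. The first summand is immediate from the definition of $\Eq(Q,\widetilde A)$ given just before Proposition \ref{prop:higgs}: it is the map $q \mapsto \widetilde A_{f(q)}(\rho(q))$. For the second summand, the projection $\pi_*\rho$ is by construction the underlying vector $\bar\rho \in \Gamma(X,\bar f^*(TY))$, and the explicit formula from Section \ref{sec:conn-intro} then gives
\[
a_{\bar f}(\bar\rho)(x) = (\bar f^{-1})_*\bigl(v_a(\bar\rho(x))\bigr)\in \Gamma(X,TX).
\]
Applying $\Phi^r(f)\colon \Gamma(X,TX) \to \Gamma_G(Q,TQ)$ to this element yields the second term in the stated formula; summing gives precisely $A_f(\rho)(q)$ as claimed.

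The main obstacle is not analytic but notational: one must carefully match the identifications $T_f P \cong \Gamma_G(Q, f^*(T\widetilde P))$ and $T_{\bar f}F(Y) \subset \Gamma(X, \bar f^*(TY))$ with the pullback structure along $\pi\colon P \to F(Y)$, and check that under these identifications the projection $\pi_*$ really does send $\rho$ to $\bar \rho$. Once this bookkeeping is in place, the two summands of \eqref{eq:tildeconnection} combine in exactly the form stated, and the theorem follows.
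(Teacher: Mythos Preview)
Your proposal is correct and matches the paper's own treatment: the paper does not give a separate proof block for this theorem but presents it as the summary obtained by ``putting this all together with the results from Section \ref{sec:conn-intro},'' i.e.\ exactly the assembly of Proposition \ref{prop:higgs}, equation \eqref{eq:tildeconnection}, and the explicit formula $a_{\bar f}(\bar\rho)(x) = (\bar f^{-1})_*(v_a(\bar\rho(x)))$ from Section \ref{sec:conn-intro} that you describe. One minor correction: the explicit definition of $\Eq(Q,\widetilde A)$ appears \emph{after} Proposition \ref{prop:higgs} (just before the proposition establishing that it is an $\cA$-connection), not before it.
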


Consider the caloron transform in the other direction.
Given an $\Aut(Q)$-bundle $P \to M$ we define $Y = P \times_{\Aut(Q)} X$ and a $G$-bundle $\widetilde P \to Y$ by 
$\widetilde P  = P \times_{\Aut(Q)} Q$. Recall that the Lie algebra of $\Aut(Q)$ is $\Gamma_G(Q, TQ)$, the Lie algebra of $G$-equivariant vector fields on $Q$.  The tangent space 
at $(p, q) \in P \times Q$ is 
$$
T_{(p, q)}(P \times Q ) = T_p P \times T_q Q
$$
and the vertical vector induced by $\mu \in \Lie( \Aut(Q))$ is given by $(\iota_p(\mu), -\mu(q))$.   

Let $A$ be a connection one-form on $P$. That is, $A$ is an $\Lie( \Aut(Q))$-valued one-form on $P$
so that if $\xi \in T_pP$ then $A_p(\xi) \in \Lie( \Aut(Q)) = \Gamma_G(Q, TQ)$ and thus $A_p(\xi)(q) \in T_q Q$. 
As above a Higgs field for $P \to M$ is an equivariant map $\Phi \colon P \to \cA$, so we have
$$
\Phi(p)_q \left( A_p(\xi)(q)  \right) \in \fg .
$$
We define
\begin{equation}\label{E:widetildeomega}
\widetilde \omega_{(p, q)}(\xi, \zeta) = \Phi(p)_q \left( A_p(\xi)(q) + \zeta \right)
\end{equation}
on the product $P\times Q$ and we have the following Theorem.

\begin{theorem} 
\label{thm:fib-unframed-conn-form}
The one-form 
$$
\widetilde \omega_{(p, q)}(\xi, \zeta) = \Phi(p)_q \left( A_p(\xi)(q) + \zeta \right)
$$
descends to a connection one-form $\widetilde A$ on the $G$-bundle $\widetilde P \to Y$.
\end{theorem}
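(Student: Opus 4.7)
The plan is to verify that $\widetilde\omega$ (i) annihilates the vertical tangent vectors for the $\Aut(Q)$-action on $P\times Q$, (ii) is $\Aut(Q)$-invariant, so that it descends to a one-form $\widetilde A$ on $\widetilde P = (P\times Q)/\Aut(Q)$, and then (iii) satisfies the two defining properties of a connection for the right $G$-action on $\widetilde P$. The only ingredients needed are the connection axioms for $A$, the $\Aut(Q)$-equivariance of $A_p$, the equivariance of $\Phi$ from Proposition \ref{P:higgs-equivariance}, and the fact that each $\Phi(p)$ is itself a $\fg$-valued connection one-form on $Q\to X$ (Proposition \ref{prop:higgs}).

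For (i), a vertical vector for the $\Aut(Q)$-action at $(p,q)$ has the form $(\iota_p(\mu), -\mu(q))$ with $\mu \in \Lie(\Aut(Q)) = \Gamma_G(Q, TQ)$, as noted just before the statement. The connection axiom $A_p(\iota_p(\mu)) = \mu$ makes the two summands inside $\Phi(p)_q(\,\cdot\,)$ cancel. For (ii), the action sends $(p,q)$ to $(p\psi, \psi^{-1}(q))$, so a tangent vector $(\xi,\zeta)$ is pushed forward to $\big((R_\psi)_*\xi,\, (\psi^{-1})_*\zeta\big)$. The equivariance of $A$ gives $A_{p\psi}((R_\psi)_*\xi) = \ad(\psi^{-1}) A_p(\xi)$, and the formula $\ad(\psi^{-1})(v)_q = (\psi^{-1})_* v_{\psi(q)}$ recorded in the proof of Proposition \ref{P:higgs-equivariance} rewrites the argument of $\Phi$ as $(\psi^{-1})_*\big(A_p(\xi)_q + \zeta\big)$, evaluated at the point $\psi^{-1}(q)$. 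Combining this with $\Phi(p\psi) = \psi^*\Phi(p)$ (again Proposition \ref{P:higgs-equivariance}) and the tautology $(\psi^*\Phi(p))_{\psi^{-1}(q)}((\psi^{-1})_*v) = \Phi(p)_q(v)$ collapses the whole expression back to $\widetilde\omega_{(p,q)}(\xi,\zeta)$.

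For (iii), write $\widetilde A$ for the descended form and lift the $G$-action on $\widetilde P$ to $P\times Q$ by $(p,q) \mapsto (p, qg)$. A fundamental vector for $g \in \fg$ lifts to $(0, \iota_q(g))$, and $\widetilde\omega_{(p,q)}(0, \iota_q(g)) = \Phi(p)_q(\iota_q(g)) = g$ because $\Phi(p)$ is a connection one-form on $Q$. For $G$-equivariance, using that $A_p(\xi) \in \Gamma_G(Q,TQ)$ and hence $A_p(\xi)(qg) = (R_g)_* A_p(\xi)(q)$, one computes $R_g^*\widetilde\omega_{(p,q)}(\xi,\zeta) = \Phi(p)_{qg}\big((R_g)_*(A_p(\xi)(q)+\zeta)\big) = (R_g^*\Phi(p))_q\big(A_p(\xi)(q)+\zeta\big) = \ad(g^{-1})\widetilde\omega_{(p,q)}(\xi,\zeta)$.

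The main obstacle is step (ii): it requires weaving together the equivariance laws for $A$ and $\Phi$ and the naturality of the pullback of a one-form on $Q$ under $\psi$ so that everything telescopes. The remaining verifications are essentially re-readings of the fact that $\Phi(p)$ is an ordinary $G$-connection on $Q$.
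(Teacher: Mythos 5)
Your proposal is correct and follows essentially the same route as the paper's proof: annihilation of the $\Aut(Q)$-fundamental vectors via $A_p(\iota_p(\mu))=\mu$, invariance under $\Aut(Q)$ by combining the equivariance of $A$ with $\Phi(p\psi)=\psi^*\Phi(p)$ and the pullback tautology, and the two connection axioms for the $G$-action using that $\Phi(p)$ is a connection on $Q$ and that $A_p(\xi)\in\Gamma_G(Q,TQ)$. No meaningful differences from the paper's argument.
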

\begin{proof}
First we show that $\omega$ annihilates vectors generated by the action of $\Aut(Q)$ on $P \times Q$. In the tangent space at $(p, q)$ such vectors have the form $(\iota_p(\mu), - \mu(q))$ for some $\mu \in \Gamma_G(Q, TQ)$ and we have 
\begin{align*}
\widetilde \omega_{(p, q)}(\iota_p(\mu), -\mu(q) )&= \Phi(p)_q(A_p(\iota_p(\mu))(q) - \mu(q))\\
                                &= \Phi(p)_q(\mu(q) - \mu(q) )\\
                                & = 0.
                                \end{align*}
Next we show that $\widetilde\omega$ is invariant under the $\Aut(Q)$-action. Let $\psi \in \Aut(Q)$ then 
\begin{align*}
(R_\psi^* \widetilde\omega)_{(p, q)}(\xi, \zeta) &= \widetilde\omega_{(p \psi, \psi^{-1}(q))}\big((R_\psi)_*(\xi), (\psi^{-1}_{\phantom{-}*})_q(\zeta)\big)\\
&= \Phi(p\psi)_{\psi^{-1}(q)}\big( (A_p(R_\psi(\xi)) )(\psi^{-1}(q))  + (\psi^{-1}_{\phantom{-}*})_q(\zeta)\big)\\
&= \Phi(p\psi)_{\psi^{-1}(q)}\big( \ad(\psi^{-1})(A_p(\xi) ) (\psi^{-1}(q))+ (\psi^{-1}_{\phantom{-}*})_q(\zeta)\big)\\
&= (\psi^*\Phi(p))_{\psi^{-1}(q)}\big( (\psi^{-1}_{\phantom{-}*})_q( A_p(\xi)(q) + \zeta )\big)\\
&= \Phi(p)_q \big( (\psi_{*})_q (\psi^{-1}_{\phantom{-}*})_q( A_p(\xi)(q) + \zeta )\big)\\
&= \Phi(p)_q( A_p(\xi)(q) + \zeta) \\
&= \widetilde\omega_{(p, q)}(\xi, \zeta)
\end{align*}
as required.

It follows that $\widetilde\omega$ descends to a $\fg$-valued one-form on $\widetilde P$ which we denote by $\widetilde A$. 
We need to check that this is a connection one-form. Notice that $G$ acts on $P \times Q$ by acting on $Q$ 
and that this commutes with the action of $\Aut(Q)$ covering the action on $\widetilde P$.  If $\iota_{[p, q]}(\chi)$ is a vertical vector 
in $\widetilde P$ it lifts to $(0, \iota_q(\chi))$ at $(p, q)$.  Applying $\widetilde\omega_{(p, q)}$ we have
$$
\widetilde\omega_{(p, q)}(0, \iota_q(\chi)) = \Phi(p)_q( 0 + \iota_q(\chi)) = \chi.
$$
Hence $\widetilde A(\iota(\chi)) = \chi$.      Let $g \in G$, then we have 
\begin{align*}
  (R_g^* \widetilde\omega)  (p, q)(\xi, \zeta) &= \widetilde\omega_{(p, qg)}(\xi, (R_g)_*(\zeta))\\
  &= \Phi(p)_{qg}\big( A_p(\xi)(qg) + (R_g)_*(\zeta)\big)\\
  &= \Phi(p)_{qg}\big( (R_g)_*( A_p(\xi)(q) + \zeta)\big)\\
  &= \ad(g^{-1})\Phi(p)_q(A_p(\xi)(q) + \zeta)\\
 &= \ad(g^{-1})\widetilde\omega_{(p, q)}(\xi, \zeta)
  \end{align*}               
  so that $R_g^* \widetilde A = \ad(g^{-1}) \widetilde A$.  Here we use the fact that $A_p(\xi ) \in \Gamma_G(Q, TQ)$,
  so it satisfies $A_p(\xi)(qg) = (R_g)_*(A_p(\xi)(q))$. 
\end{proof}

The final thing we need to do is to show that when we apply the caloron transform twice in either direction the 
connections map to each other under the isomorphisms introduced in Section \ref{sec:unframed-fibrations}. 

Consider first the 
case that we start with a $G$-bundle $\widetilde P \to Y$ with a connection $\widetilde A$. The isomorphism in question  
goes from $\eta^*(\Eq(Q, \widetilde P)) \times_{\Aut(Q)} Q$ to $\widetilde P$ and is given by $(f, q) \mapsto f(q)$.  Under this isomorphism $\widetilde A$ pulls back to a connection on $\eta^*(\Eq(Q, \widetilde P)) \times_{\Aut(Q)} Q$ which we further pullback to 
$\eta^*(\Eq(Q, \widetilde P)) \times  Q$.  We note that 
$$
T_{(f, q)} \eta^*(\Eq(Q, \widetilde P)) \times Q  = \Gamma_G(Q, f^{*}(\widetilde P)) \times T_qQ 
$$
and the pullback of $\widetilde A$ applied to a pair $(\rho, \zeta)$ is 
$$
\widetilde A_{f(q)}(\rho(q) + f_*(\zeta)) = \widetilde A_{(f(q)}(\rho(q)) + \Phi(f)_q(\zeta).
$$
Consider now the connection constructed on $\eta^*(\Eq(Q, \widetilde P)) \times  Q$ by applying the constructions above twice.  First the connection $\widetilde A$ 
gives rise to a connection 
$$
A = \eta^* \Eq(Q, \widetilde A) + \Phi^r(\pi^*a)
$$
and the Higgs field identified in Proposition \ref{prop:higgs}.

The pair $(A, \Phi)$ gives rise to a $\fg$-valued one-form on $\eta^*(\Eq(Q, \widetilde P)) \times Q$ given by 
\begin{align*}
\widetilde\omega_{(f, q)}(\rho, \zeta) &= \Phi(f)_q\big( A_f(\rho)(q) + \zeta\big)\\
  &= \Phi(f)_q \big( \eta^*\Eq(Q, \widetilde A)_f(\rho)(q) + \Phi(a(\pi_*(\rho))) + \zeta \big)\\
  &= \Phi(f)_q\big(\widetilde A_{f(q)}(\rho(q))\big) +  \Phi(f)_q\big(\Phi(a(\pi_*(\rho)))\big) + \Phi(f)_q(\zeta).\\
  \end{align*}
Notice that under the identifications we are using (see Remark \ref{rem:inclusion}) $\widetilde A_{f(q)}(\xi(q))$ is a vertical vector 
in $Q$ and already identified with a vector in $\fg$, so there is no need to apply $\Phi(f)_q$.   Further the
vector $ \Phi(a(\pi_*(\rho)))$ is horizontal for $\widetilde A$, so applying $\Phi(f) = f^{*}(\widetilde A)$
gives zero. Hence we have
\begin{align*}
\widetilde\omega_{(f, q)}(\rho, \zeta) &= \widetilde A_{f(q)}(\rho(q)) + \Phi(f)_q( \zeta)\\
  \end{align*}
 as required.
 
Consider the second case where we start with an $\Aut(Q)$-bundle $P \to M$ with connection $A$ and 
Higgs field $\Phi$.  From these we construct a $G$-bundle 
$$
P \times_{\Aut(Q)} Q \to P \times_{\Aut(Q)} X
$$
with connection $\widetilde A$ whose pullback to $P \times Q$ we have called $\widetilde\omega$ defined
by (\ref{E:widetildeomega}):
$$
\widetilde \omega_{(p, q)}(\xi, \zeta) = \Phi(p)_q \left( A_p(\xi)(q) + \zeta \right).
$$
From this we construct the $\Aut(Q)$-bundle $\eta^*(\Eq(Q, P \times_{\Aut(Q)} Q)) \to M$
with connection $A'$ and Higgs field $(\Phi')^r$ where
$$
A' = \eta^*(\Eq(Q, \widetilde A)) + (\Phi')^r(\pi^*a).
$$
 Each $p \in P$ defines a 
natural map $f_p \colon Q \to P \times_{\Aut(Q)} Q$
by $f_p(q) = [p, q]$ and $\chi(p) =  f_p$ is the isomorphism of $\Aut(Q)$-bundles defined in 
Section \ref{sec:unframed-fibrations}:
$$
\chi \colon P \to \eta^*(\Eq(Q, P \times_{\Aut(Q)} Q)).
$$
We want to show that under this isomorphism $A'$ and $\Phi'$ pullback to $A$ and $\Phi$.  

First we consider $\Phi'$. 
From Proposition \ref{prop:higgs} we have that at $p$ the pullback of $\Phi'$ is $f_p^{*}(\widetilde A)$. 
 The frame $f_p \colon Q \to P \times_{\Aut(Q)} Q$ lifts to 
$Q \to P \times Q$ sending $q \mapsto (p, q)$ and the tangent to this is $\zeta \mapsto (0, \zeta)$.  Applying $\widetilde \omega$ gives $\Phi(p)_q(\zeta)$ as required. 

To calculate the pullback of $A'$ by $\chi$ we need to consider the 
tangent map to $\chi$ applied to $\xi \in T_p P$.  
The result is a section in $\Gamma_G(Q, f_p^{*}(P \times_{\Aut(Q)} Q))$
which we can lift to $P \times Q$ and realise as $q \mapsto (\xi, 0) \in T_pP \times T_qQ$.  Considering the first
term in $A'$ we have 
$$
\eta^*(\Eq(Q, \widetilde A))_{(p, q)}(\xi, 0) = \Phi(p)_q(A_p(\xi)(q) + 0)
$$
so that 
$$
A'_p(\xi) =  \Phi(p)(A_p(\xi) ) + \Phi^r(p)(a(\pi_*(\xi, 0))).
$$
 But
the  connection $a$ is the projection of the connection $A$ so we must have
$$
A'_p(\xi) =  \Phi(p)(A_p(\xi) ) + \Phi^r(p)(\b(A_p(\xi)).
$$
The relation in equation \eqref{eq:leftright} then tells us that
$$
A'_p(\xi) = A_p(\xi)
$$
as required.

\subsection{The unframed caloron correspondence for products with connections and Higgs fields}
We consider Theorem \ref{thm:fib-unframed-conn-higgs} and show how to reduce it in this case. 
Let $\widetilde P \to M \times X$ be a $G$-bundle of type $Q \to X$ with connection $\widetilde A$. Take
 as $a$,  the flat connection on $X \times M \to M$. We have seen that the 
 $\Aut( Q)$-bundle can be reduced to a $\cG$-bundle $P \to M$, whose fibre at $m \in M$ is all diffeomorphisms $f \colon Q \to \widetilde P_m$ covering $X \to X \times M$ given by $x \mapsto (x, m)$.  Therefore, in the notation of Theorem \ref{thm:fib-unframed-conn-higgs}, $\bar f (x) = (x, m)$.
 Let $\rho $ be  a tangent vector at $f$. Then $\bar \rho(x) = (0, \nu) \in T_xX \times T_m M$ for some constant
 vector $\nu \in T_mM$ and thus $v_a(\bar\rho(x)) = 0$ and we have the following reduction of Theorem
 \ref{thm:fib-unframed-conn-higgs}.

\begin{theorem}
\label{thm:prod-unframed-conn-higgs}
Let  $\widetilde P \to X \times M $ be a $G$-bundle  of type $Q \to X$  with connection $\widetilde A$. Then the $\cG$-bundle $P \to M$ has connection $A$ and Higgs field $\Phi$ defined as follows. 
 \begin{itemize}
\item Let $f \in P$ so that $f \colon Q \to \widetilde P_m$ for some $m \in M$ covering $x \mapsto (x, m)$, then 
 $\Phi(f) = f^*(\widetilde A)$;
\item 
 If $\rho \in  \Gamma(Q, f^*(T\widetilde P))$, then $A_f(\rho) \in \Gamma_G(Q, TQ)$ is given by 
$$
A_f(\rho)(q) = \widetilde A_{f(q)}(\rho(q)) .
$$

\end{itemize}
\end{theorem}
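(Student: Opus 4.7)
The plan is to deduce this as a direct specialisation of Theorem \ref{thm:fib-unframed-conn-higgs}, using the fact that the flat connection $a$ on the trivial fibration $X \times M \to M$ interacts very simply with the $\cG$-reduction of the $\Aut(Q)$-bundle.

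First I would recall the reduction from the unframed product discussion: inside the $\Aut(Q)$-bundle $\cC(\widetilde P)$, the $\cG$-bundle $P$ consists of those $G$-equivariant maps $f\colon Q \to \widetilde P_m$ that cover the canonical inclusion $\bar f\colon X \to X \times M$, $x \mapsto (x,m)$. The Higgs field clause $\Phi(f) = f^{*}(\widetilde A)$ is then inherited unchanged from Theorem \ref{thm:fib-unframed-conn-higgs} together with Proposition \ref{prop:higgs}, since this statement does not involve the fibration connection at all.

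The substantive step is to simplify the formula for $A_f(\rho)$. A tangent vector $\rho \in T_f P$ arises from a smooth one-parameter family $f_t \colon Q \to \widetilde P$ in the $\cG$-reduction, so each $f_t$ covers $x \mapsto (x, m(t))$ for some path $m(t)$ in $M$ with $m(0) = m$. The projected tangent vector $\bar\rho \in \Gamma(X, \bar f^{*} T(X\times M))$ is therefore
\[
\bar\rho(x) = t_0\big(t \mapsto (x, m(t))\big) = (0,\nu) \in T_x X \times T_m M,
\]
with $\nu = t_0(m(t))$ independent of $x$. Next I would observe that the flat connection $a$ on $X \times M \to M$ has horizontal distribution $\{0\} \times TM$, so its vertical projection sends $(\xi,\nu) \mapsto \xi$; in particular $v_a(\bar\rho(x)) = v_a(0,\nu) = 0$ for every $x \in X$. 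Consequently the correction term
\[
\Phi^r(f)\big(x \mapsto (\bar f^{-1})_{*}(v_a(\bar\rho(x)))\big)
\]
in Theorem \ref{thm:fib-unframed-conn-higgs} vanishes identically, and the formula collapses to $A_f(\rho)(q) = \widetilde A_{f(q)}(\rho(q))$, as desired.

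There is no real obstacle: the main thing to be careful with is the identification of tangent vectors to the $\cG$-reduction $P$ (as opposed to tangent vectors to the full $\Aut(Q)$-bundle $\cC(\widetilde P)$), so that the projection $\bar\rho$ really is of the constant-in-$x$ form $(0,\nu)$. Once this is in hand, the reduction is mechanical and produces the stated formulas.
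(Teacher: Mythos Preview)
Your proposal is correct and matches the paper's own argument essentially verbatim: the paper also specialises Theorem~\ref{thm:fib-unframed-conn-higgs} by taking the flat connection $a$ on $X \times M \to M$, observes that for $f$ in the $\cG$-reduction one has $\bar f(x) = (x,m)$ and hence $\bar\rho(x) = (0,\nu)$ with $\nu$ constant, and concludes $v_a(\bar\rho(x)) = 0$ so that the correction term vanishes. Your exposition is in fact slightly more explicit about why $\bar\rho$ takes this form, but the route is identical.
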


Going in the other direction we have a $\cG$-bundle $P \to M$ with connection $A$ and Higgs field $\Phi$ and define $\widetilde P = P \times_{\cG} Q \to M \times X$. In this case  $A$ is a $\Lie(\cG)$-valued one-form on $P$
so that if $\xi \in T_pP$, then $A_p(\xi) \in  \Gamma_G(Q, \fg)$ and applying the Higgs field 
has no effect. The formula in the fibration case therefore reduces to 
$$
\widetilde \omega_{(p, q)}(\xi, \zeta) =  A_p(\xi)(q) + \Phi(p)_q (\zeta)
$$
and we have the following Theorem.

\begin{theorem} 
\label{thm:prod-unframed-conn-form}
The one-form 
$$
\widetilde \omega_{(p, q)}(\xi, \zeta) =  A_p(\xi)(q) + \Phi(p)_q (\zeta)
$$
descends to a connection one-form $\widetilde A$ on the $G$-bundle $\widetilde P \to X \times M$.
\end{theorem}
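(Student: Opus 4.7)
The plan is to obtain Theorem \ref{thm:prod-unframed-conn-form} as a direct specialization of Theorem \ref{thm:fib-unframed-conn-form}, taking advantage of the reduction of the structure group from $\Aut(Q)$ to $\cG$ that is available in the product case (and was established in Theorem \ref{thm:prod-unframed-conn-higgs}). Since $\cG$ is the kernel of $\beta\colon \Aut(Q)\to \Diff^Q(X)$, its Lie algebra is $\Gamma(X, \ad(Q)) = \Gamma_G(Q, \fg)$, included in $\Gamma_G(Q, TQ) = \Lie(\Aut(Q))$ as in Remark \ref{rem:inclusion} by $\mu \mapsto (q \mapsto \iota_q(\mu(q)))$. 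So, for an $\cG$-connection $A$, the quantity $A_p(\xi)(q)$ that appears in the fibration formula is the vertical vector $\iota_q(A_p(\xi)(q))$ corresponding to the element $A_p(\xi)(q) \in \fg$.

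The single observation driving the reduction is that, because $\Phi(p)$ is a connection one-form on $Q$, it satisfies $\Phi(p)_q(\iota_q(\chi)) = \chi$ for every $\chi \in \fg$. Substituting into the fibration formula and using $\fg$-linearity in the two summands separately gives
\begin{align*}
\widetilde\omega_{(p,q)}(\xi, \zeta)
   &= \Phi(p)_q\bigl( A_p(\xi)(q) + \zeta\bigr) \\
   &= \Phi(p)_q\bigl(\iota_q(A_p(\xi)(q))\bigr) + \Phi(p)_q(\zeta) \\
   &= A_p(\xi)(q) + \Phi(p)_q(\zeta),
\end{align*}
which is exactly the one-form in the statement. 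Theorem \ref{thm:fib-unframed-conn-form} already guarantees that $\widetilde\omega$ descends to a connection one-form $\widetilde A$ on $P \times_{\Aut(Q)} Q \to P \times_{\Aut(Q)} X$; restricting to the $\cG$-reduction gives the same form descending to $P \times_{\cG} Q \to M \times X$, because $\cG \subset \Aut(Q)$ acts on $P \times Q$ and the equivariance and verticality checks of Theorem \ref{thm:fib-unframed-conn-form} apply verbatim to elements of $\cG$.

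If one prefers a self-contained argument, the steps mirror the proof of Theorem \ref{thm:fib-unframed-conn-form}, with each computation shorter: vectors generated by $\mu \in \Lie(\cG)$ on $P \times Q$ have the form $(\iota_p(\mu), -\iota_q(\mu(q)))$ and $\widetilde\omega$ kills them by the same cancellation; $\cG$-invariance uses $\Phi(pg) = g^*\Phi(p)$ and the standard $\cG$-equivariance of $A$; the vertical $G$-action contributes only through $\zeta \mapsto \iota_q(\chi)$, giving $\widetilde A(\iota(\chi)) = \chi$ directly; and right $G$-equivariance follows from $\ad(g^{-1})\Phi(p)_q = \Phi(p)_{qg}\circ (R_g)_*$ together with the $G$-equivariance of $A_p(\xi) \in \Gamma_G(Q, \fg)$. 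The main thing to keep straight, as in the fibration case, is the identification of $\fg$-valued functions on $Q$ with their images as vertical vector fields, so that no Higgs field ever gets applied to something that is already in $\fg$; once that bookkeeping is fixed, no calculation is genuinely difficult.
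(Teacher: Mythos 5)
Your proposal is correct and follows essentially the same route as the paper: the authors likewise obtain the product formula by specializing the fibration formula $\Phi(p)_q(A_p(\xi)(q)+\zeta)$, observing that since $A$ is $\Lie(\cG)$-valued one has $A_p(\xi)\in\Gamma_G(Q,\fg)$ and applying the Higgs field (a connection one-form on $Q$) to the corresponding vertical vector has no effect. Your explicit bookkeeping via the inclusion $\mu\mapsto(q\mapsto\iota_q(\mu(q)))$ of Remark \ref{rem:inclusion} just makes the paper's phrase ``applying the Higgs field has no effect'' precise.
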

 
\subsection{The framed caloron correspondences with connections and Higgs fields}  We leave it to the interested
reader to show that in the case of framings the connections we have defined already respect the framings and 
the Higgs fields take values in the appropriate space of framed connections.

\section{Characteristic classes for gauge group bundles}
\label{sec:classes}
In this section we shall use the caloron correspondence to calculate characteristic classes for $\cG$-bundles, following a similar approach as in \cite{Murray-Vozzo1}. We begin with a brief review of characteristic classes and Chern--Weil theory for $G$-bundles for the convenience of the reader. 

\subsection{Review of Chern--Weil theory}

Let $EG\to BG$ denote the universal bundle, with the property that any principal $G$-bundle over $M$ is isomorphic to the pullback of $EG \to BG$ by a classifying map $f\colon M\to BG$. Up to homotopy equivalence, the universal bundle is fully characterised by the fact that it is a principal $G$-bundle and $EG$ is a contractible space. A characteristic class for a $G$-bundle over $M$ is a class in $H^*(M)$ obtained by pulling back elements in the cohomology group $H^*(BG)$. Chern--Weil theory provides a mechanism for producing characteristic classes in de Rham cohomology. Let $I^k(\mathfrak g)$ denote the algebra of multilinear, symmetric, ad-invariant functions on $k$ copies of the Lie algebra $\mathfrak g$. Elements of $I^k(\mathfrak g)$ are called invariant polynomials. The Chern--Weil homomorphism is a map
\[cw\colon I^k(\mathfrak g) \to H^{2k}(M)\]
defined by $f\mapsto cw_f(A) = f(F,\dots, F)$, for $A$ a connection on the $G$-bundle with curvature $F$. The class $cw_f(A) \in H^{2k}(M)$ is independent of the choice of connection and represents a characteristic class of the bundle. For compact Lie groups $G$, the Chern--Weil homomorphism applied to the universal bundle is an isomorphism, which extends to an algebra isomorphism $cw\colon I^*(\mathfrak g)\xrightarrow{\sim} H^*(BG)$. The proof of these results can be found in many standard references such as \cite{Dupont}.

Since $f$ is multilinear and symmetric, we will adopt the convention that whenever $f$ has repeated entries they will be collected into one slot and written as a power, for instance $f(\underbrace{A, \dots , A}_{k}, \underbrace{B ,\dots, B}_l) = f(A^kB^l)$. 

\subsection{The curvature of the caloron connection}
\label{curvature}
The first step towards calculating characteristic classes is to determine the curvature of the caloron connection given in Theorem \ref{thm:prod-unframed-conn-form}. Recall that $\tilde A$ is expressed in terms of a connection $A$ and Higgs field $\Phi$ as follows. For a tangent vector $ (\xi, \zeta) \in T_{[p,q]} \wt P $ we have
$$
\tilde A_{(p,q)}(\xi, \zeta) = A_p(\xi)(q) + \Phi(p)_q (\zeta)  .
$$
The curvature $\tilde F$ of $\tilde \omega$ is given by the formula
$$
\tilde F = d \tilde A + \tfrac12 [\tilde A, \tilde A]  .
$$
For a pair of tangent vectors $V_1 = (\xi_1, \zeta_1), V_2 = (\xi_2, \zeta_2) \in T_{[p,q]} \wt P$, the commutator term is given by
\begin{align*}
\tfrac12 [\tilde A_{(p,q)}(V_1), \tilde A_{(p,q)} (V_2)]
	& = \tfrac12 [A_p (\xi_1) , A_p(\xi_2) ](q) + \tfrac12 [A_p(\xi_1)(q), \Phi(p)_q (\xi_2) ]\\ 
	&\hspace{1 cm} + \tfrac12 [ \Phi(p)_q (\zeta_1), A_p(\zeta_2)(q) ] + \tfrac12 [\Phi(p)_q (\zeta_1), \Phi(p)_q (\zeta_2)]\\
	& = \tfrac12 [A, A]_p (\xi_1,\xi_2)(q) + \tfrac12 [\Phi, \Phi] (p)_q (\zeta_1, \zeta_2) + [A, \Phi]_{(p,q)}(V_1, V_2) ,
\end{align*}
while the differential
$$
d \tilde A (V_1, V_2) = \frac12 \big( V_1 ( \tilde A(V_2) ) - V_2 ( \tilde A(V_1) ) - \tilde A ([V_1, V_2]) \big)
$$
can be expressed as a sum of four terms,
$$
d \tilde A = d_P A + d_Q \Phi + d_P \Phi  + d_Q A  .
$$
Here we are considering $A$ and $\Phi$ variously as forms on $P$ and $Q$ and as maps from these spaces. More specifically, we have
\\
\begin{itemize}
\item $d_P A$ is the derivative of $A$ considered as a 1-form on $P$:
\vspace{0.5ex}
$$
d_PA_{(p,q)}(V_1,V_2) = \frac 12 \Big((\xi_1A)_p(\xi_2)-(\xi_2A)_p(\xi_1)-A_p([\xi_1,\xi_2])\Big)(q)
$$\vspace{0.5ex}
\item $d_Q \Phi$ is the derivative of the Higgs field considered as a 1-form on $Q$, i.e. an element of $\cA$:
\vspace{0.5ex}
$$
d_Q\Phi_{(p,q)}(V_1,V_2) = \frac 12 \Big(\zeta_1(\Phi(p))_q(\zeta_2)-\zeta_2(\Phi(p))_q(\zeta_1)-\Phi(p)_q([\zeta_1,\zeta_2])\Big)
$$
\vspace{0.5ex}
\item $d_P \Phi$ is the derivative of the Higgs field considered as a map $P \to \cA$:
\vspace{0.5ex}
$$
d_P\Phi_{(p,q)}(V_1,V_2) = \frac 12 \Big((\xi_1\Phi)(p)_q(\zeta_2)-(\xi_2\Phi)(p)(\zeta_1)\Big)
$$
\vspace{0.5ex}
\item $d_Q A$ is the derivative of $A$ considered as an element of $\Lie (\cG)$, i.e.~a $G$-equivariant map $Q \to \fg$:
\vspace{0.5ex}
$$
d_QA_{(p,q)}(V_1,V_2) = \frac 12 \Big(\zeta_1(A_p(\xi_2))-\zeta_2(A_p(\xi_1))-A_p([\xi_1,\xi_2])\Big)(q)
$$\vspace{0.5ex}
\end{itemize}
Putting this all together, we have
$$
\tilde F = F_A + F_ \Phi + \nabla \Phi  ,
$$
where $F_A$ is the curvature of $A$, $F_\Phi$ is the curvature of $\Phi$ considered as a connection on $Q$ and $\nabla \Phi = d_P \Phi + [A, \Phi] + d_Q A$.

\begin{remark}
Comparing this with the caloron correspondence for loop groups \cite{Murray-Vozzo1}, the Higgs field there is a flat connection on the trivial bundle over the circle and hence $F_\Phi = 0$.
\end{remark}

\subsection{Caloron classes}

We can now proceed to define characteristic classes for the $\cG$-bundle $P \to M$ using the caloron correspondence.
Let $f$ be an invariant polynomial in $I^k(\mathfrak g)$. The Chern--Weil homomorphism for the caloron transform $\wt P \to M \times X$ determines a $2k$-form representing a class $cw_f(\wt A) \in H^{2k}(M \times X)$. Integrating this form over the $d$-dimensional manifold $X$ yields a closed $(2k-d)$-form on $M$, which we call the \emph{caloron class} of $P$. In short, we have
$$
I^k(\mathfrak g) \xrightarrow{cw_f(\wt A)} H^{2k} (M \times X) \xrightarrow{\int_X} H^{2k-d}(M)  ,
$$
and we denote by $\varsigma_{2k-d} (P)$ the resulting caloron class in $H^{2k-d}(M)$.

Note that in order to define a degree $r$ characteristic class for $P$, we must pick $k = (d + r)/2$, which in particular requires $r$ and $d$ to have the same parity\footnote{In the loop group case $X$ was the circle, so $\dim(X) = 1$. Hence the string classes are all of odd degree.}.
An explicit formula for the caloron classes of $P$ is obtained by calculating $\int_X cw(\wt A)$. Given a connection $A$ and Higgs field $\Phi$ for $P$, we know from Section \ref{curvature} that the curvature $\tilde F$ of the caloron connection $\tilde A$ on $\wt P$ splits into a sum of three terms,
$$
\tilde F = F_A + F_ \Phi + \nabla \Phi  .
$$
Inserted as arguments into an invariant polynomial $f \in I^k(\mathfrak g)$, we have 
\begin{align*}
cw_f(\wt A)	= f(\tilde F^k) = f\big((F_A + F_ \Phi + \nabla \Phi)^k\big)  .
\end{align*}
Note that the base is a product and the forms $F_A, F_\Phi$ and $\nabla_A \Phi$ are of type $(2,0), (0,2)$ and $(1,1)$ respectively. Integrating over $X$ picks out only the terms of the type $(2k-d, d)$. Hence, we have
$$
\varsigma_{2k-d} (P) = \int_X f\big((F_A + F_ \Phi + \nabla \Phi)^k\big)_{[2k-d,d]}
$$
which for the lowest values of the pair $(d,k)$ yields the following table:
\begin{center}
\begin{table}
 { \setlength{\extrarowheight}{1.5pt} \begin{tabular}{|l|l|l|l|l|}
\hline \backslashbox{$d$\hspace*{-5ex}}{\hspace*{-1.5ex}$k$} & $1$&$2$& $3$ \\\hline 
1 &$ \int_X f(\nabla \Phi)$ & $ \int_X f\Big(2F_A\nabla \Phi\Big)$ & $\int_X f\Big(3F_A^2\nabla \Phi\Big)$\\
2 & $ \int_X f(F_\Phi)$& $\int_X f \Big(\nabla \Phi^2 +2F_AF_\Phi \Big)$ & $ \int_X f \Big(3F_A\nabla \Phi^2+3F_A^2F_\Phi\Big)$\\
3 & - & $ \int_X f \Big(2\nabla \Phi F_\Phi \Big)$ & $\int_X f\Big(\nabla \Phi^3 + 3F_A\nabla \Phi F_\Phi + 3F_A F_\Phi \nabla \Phi\Big)$\\
4 & - & $ \int_X f \Big(F_\Phi^2 \Big)$&$ \int_X f \Big(3\nabla \Phi^2F_\Phi+3F_A F_\Phi^2\Big)$ \\
5 & - & - & $ \int_X f \Big(3\nabla \Phi F_\Phi^2\Big)$\\ 
6 & - & - & $ \int_X f \Big(F_\Phi^3\Big)$\\ \hline
\end{tabular} }
\caption{}
\label{table:classes}
\end{table}
\end{center}
\vspace{2.5ex}
In particular, for $X= S^1$,  we recover the string classes introduced in \cite{Murray:2003, Murray-Vozzo1},
$$
\varsigma_{2k-1} (P) = k \int_{S^1} f\Big(F_A^{k-1}\nabla \Phi \Big)  .
$$
More generally for a manifold $X$ of dimension $d$, the lowest degree caloron classes are given by 
\begin{align*}
\varsigma_0 (P) &= \int_X f\Big(F_\Phi^{d/2}\Big)  ,\\
\varsigma_1 (P) &= \frac{d+1}{2} \int_X f \Big(\nabla \Phi F_\Phi^{\frac{d-1}2}\Big)  ,\\
\varsigma_{2} (P) &= \frac{d+2}{2} \int_X f \left(F_AF_\Phi^{d/2}+\frac12 \sum_{j=0}^{\frac{d-2}{2}}\nabla \Phi F_\Phi^{\frac{d-2}{2}-j} \nabla \Phi F_\Phi^j\right)  ,\\
\varsigma_{3} (P) &= \frac{d+3}{2} \int_X f\left(\sum_{j=0}^{\frac{d-1}{2}} F_AF_\Phi^{\frac{d-1}{2}-j}\nabla \Phi F_\Phi^j+ \frac 13\sum_{j=0}^{\frac{d-3}{2}}\sum_{l=0}^j\nabla \Phi F_\Phi^{\frac{d-3}{2}-j} \nabla \Phi F_\Phi^{j-l}\nabla \Phi F_\Phi^l\right)  ,\\
\varsigma_{4} (P) &= \frac{d+4}{2} \int_X f\Bigg(\frac 12\sum_{j=0}^{\frac{d}{2}} F_AF_\Phi^{\frac{d}{2}-j} F_A F_\Phi^j+ \sum_{j=0}^{\frac{d-2}{2}}\sum_{l=0}^jF_A F_\Phi^{\frac{d-2}{2}-j} \nabla \Phi F_\Phi^{j-l}\nabla \Phi F_\Phi^l \\
&\hspace{3.5 cm} + \frac 14\sum_{j=0}^{\frac{d-4}{2}}\sum_{l=0}^j\sum_{m=0}^l\nabla \Phi F_\Phi^{\frac{d-4}{2}-j} \nabla \Phi F_\Phi^{j-l}\nabla \Phi F_\Phi^{l-m}\nabla \Phi F_\Phi^m\Bigg) . \\
\end{align*}

\noindent The formulas  get increasingly more complicated, involving more nested sums, as one goes to higher degrees in cohomology. However, in the special case when $G$ is abelian a straightforward calculation leads to the following general formula:
$$
\varsigma_{2k-d}(P) = \sum_{i=\frac d2}^{\min\{d, k\}} \binom{k}{i} \binom{i}{d-i} \int_X  f\Big(F_A^{k-i} F_\Phi^{d-i} (\nabla_A \Phi)^{2i-d}\Big).
$$
\section{Universal caloron classes}
\label{sec:universal} 

In this final section we restrict our attention to the framed and product case of the constructions in Section \ref{sec:description}.
There is  a natural model for the universal $\cG_0$-bundle which allows for explicit expressions for the caloron classes.
  

The group of based gauge transformations $\cG_0$ acts freely on the space of connections $\cA$ and the quotient is a smooth tame Fr\'echet manifold \cite{ACM}, which can be identified with the classifying space $B\cG_0$.
We want to calculate the caloron classes explicitly for the universal bundle $ \cA \to \cA / \cG_0$. We do this by choosing a connection and Higgs field; the connection is the  one described in \cite{Atiyah:1984} whose  horizontal subspaces are given by $\ker d_\omega^*$ and the Higgs field is simply $ \Phi \colon  \cA \xrightarrow{\id} \cA$. The operator $d_\omega^*$ is the adjoint of the covariant exterior differential $d_\omega$ with respect to the inner product 
\[(\alpha ,\beta) = \int_X \alpha \wedge * \beta\]
for $\alpha ,\beta \in \Omega^p(X,\ad Q)$. Because $\cA$ is an affine space, the  tangent space to $\cA$ at $\omega $ is $\Omega^1(X,\ad Q)$. The map into the vertical tangent space is 
$$
d_ \omega \colon \Gamma_0(X, \ad Q) \to T_ \omega\cA = \Omega^1(X,\ad Q)
$$
and we have a splitting $T_\omega\cA = \im d_\omega \oplus \ker d_\omega^*$ into orthogonal subspaces. Since the gauge transformations are based, the Laplacian
$$
d^*_ \omega d_ \omega \colon \Gamma_0(X, \ad Q)\to\Gamma_0(X, \ad Q)
$$
is invertible and we denote by $G_ \omega = ( d^*_ \omega d_ \omega)^{-1}$ the corresponding Green's operator. The connection form can now be expressed as $G_\omega d_ \omega ^*$ and by Theorem \ref{thm:prod-unframed-conn-form} the caloron connection
on $\tilde\cA = (\cA \times Q)/\cG_0$ over $\cA/\cG_0 \times X$ is given by
$$
\tilde A_{(\omega, q)}(\xi, \zeta) = G_ \omega d_ \omega ^*(q) (\xi)+ \omega_q(\zeta)
$$ 
for a tangent vector $ (\xi, \zeta) \in T_{[\omega, q]} \wt \cA$, where $G_ \omega d_ \omega ^*(\xi)$ is interpreted as a $G$-equivariant map $Q\to\mathfrak g$. Note that this caloron connection is framed. Recall that the curvature of $\tilde A$ consists of three terms,
\[\tilde F = F_A + F_\Phi + \nabla \Phi .\]
Since the Higgs field maps a connection $\omega \in \cA$ onto itself, clearly $F_{\Phi}$ evaluated on a pair of tangent vectors $V_1=(\xi_1, \zeta_1)$ and $V_2=(\xi_2, \zeta_2)$ at $(\omega, q) \in \tilde \cA$ is given by the curvature $F_\omega(q)(\zeta_1, \zeta_2)$ of $\omega$. 

Next let us consider the term $\nabla \Phi$. Recall that as with any  curvature we only need to evaluate it on horizontal  vectors.  Indeed if $V_1$ and $V_2$ are horizontal, then $\tilde F(V_1,V_2) = d\tilde A(V_1,V_2)$ so the commutator term does not contribute. Let therefore $\zeta_1, \zeta_2 \in T_qQ$ be horizontal for $\omega$ and $\xi_1, \xi_2 \in \Omega^1(X,\ad Q)$ be horizontal (i.e. $d_\omega^* \xi = 0$). As $\cA$ is an affine space we can extend $\xi_1$ and $\xi_2$ to constant vector fields whose Lie bracket must vanish.  We also extend $\zeta_1$ and $\zeta_2$ to vector fields. From Section \ref{curvature} we have
\[\nabla \Phi_{(\omega, q)}(V_1,V_2) = d_P \Phi_{(\omega, q)}(V_1,V_2) + d_Q \big(G_ \omega d_ \omega ^*\big)(q) (V_1,V_2)  ,\]
where 
\[d_P \Phi_{(\omega, q)}(V_1,V_2) = -\frac 12 \omega_q([\zeta_1, \zeta_2] )\]
since $\Phi$ is the identity map, and 
\begin{align*}
d_Q \big(G_ \omega d_ \omega ^*\big)&(q)(V_1,V_2)  \\
&= \frac 12 \Big(\zeta_1 \big( q \mapsto G_\omega d_\omega^* (q) (\xi_2)\big) - \zeta_2 \big(q \mapsto  G_\omega d_\omega^* (q) (\xi_1) \big) \\
     &= \frac 12 \Bigg( t_0\Big(t \mapsto  \omega_q(\xi_2) + t \xi_1(\zeta_2)(q) \Big) - t_0\Big(t \mapsto  \omega_q(\xi_1) + t \xi_2(\zeta_1)(q) \Big)   \Bigg)\\
    &= \frac 12 \Big(\xi_1(\zeta_2)(q) - \xi_2(\zeta_1)(q)  \Big),\\
    \end{align*}
                  where we have abused the notation here by treating $\omega$ both as a constant and a variable.  Hence the $(1,1)$ component of the curvature applied to horizontal vector fields $V_1$ and $V_2$ is 
              \[\nabla \Phi_{(\omega, q)}(V_1,V_2) = \frac 12 \Big(\xi_1(\zeta_2)(q) - \xi_2(\zeta_1)(q) )  - \omega_q([\zeta_1, \zeta_2] )\Big).\]     
   Finally we calculate
   $$
F_{A(\omega,q) }(V_1, V_2) = \frac 12 \Big(\xi_1(\omega \mapsto G_\omega d_\omega^*)(q) (\xi_2)  - \xi_2(\omega \mapsto G_\omega d_\omega^*)(q) (\xi_1) \Big).
$$
Consider the first term which is 
\begin{align*}
t_0\Big( t \mapsto G_{\omega + t\xi_1} d_{\omega+t\xi_1}^* (\xi_2)\Big)
&=t_0\Big( t \mapsto G_{\omega + t\xi_1}\Big)  d_{\omega}^* (\xi_2)  
+ G_{\omega}t_0\Big( t \mapsto  d_{\omega+t\xi_1}^* (\xi_2)\Big)\\
&= t_0\Big( t \mapsto G_{\omega + t\xi_1}\Big)0  + G_\omega t_0\Big(t \mapsto  d_\omega^* (\xi_2) + t \ad_{\xi_1}^*(\xi_2)\Big)\\
& = G_\omega \ad_{\xi_1}^*(\xi_2).
\end{align*}
Taking the adjoint is linear and therefore commutes with differentiation. Finally we note that $\ad_{\xi_1}^*(\xi_2) = - \ad_{\xi_2}^*(\xi_1)$ because
$$
\int_X \<\ad_{\xi_1}^*(\xi_2), \xi_3\> \vol_X  = \int_X \< \xi_2, \ad_{\xi_1}( \xi_3) \> \vol_X  =  - \int_X \< \ad_{\xi_2}^*(\xi_1), \xi_3 \>\vol_X
$$
and the inner product on $\fg$ is invariant. Hence we conclude that the curvature $F_A$ applied to  $V_1$ and $V_2$ is $G_\omega \ad_{\xi_1}^*(\xi_2) $.  
Putting this all together we have
\begin{multline*}
\tilde F_{(\omega,q)}(V_1,V_2) =  \\
G_\omega \ad_{\xi_1}^*(\xi_2) + F_\omega(q)(\zeta_1, \zeta_2) +  \frac 12 \Big(\xi_1(\zeta_2)(q) - \xi_2(\zeta_1)(q)  - \omega_q([\zeta_1, \zeta_2] )\Big).
\end{multline*}

Using this $\tilde F$ in the formulas in Table \ref{table:classes} for $d=4$ and $k=3$ we reproduce the results of \cite{Atiyah:1984} 
and for $d=k=3$ the  results of \cite{CarMicMur97Index-theory-gerbes}.



\end{document}